\title{Singularities of mean convex level set flow in general ambient manifolds}
\author{Robert Haslhofer and Or Hershkovits}
\date{\today}
\numberwithin{equation}{section}
  \theoremstyle{plain}
 \newtheorem{theorem}[equation]{Theorem}
\newtheorem{proposition}[equation]{Proposition}
\newtheorem{corollary}[equation]{Corollary}
 \newtheorem{lemma}[equation]{Lemma}
 \newtheorem{claim}[equation]{Claim}
 \theoremstyle{remark}
 \newtheorem{remark}[equation]{Remark}
 \theoremstyle{remark}
\theoremstyle{definition}
\newcommand*{\rom}[1]{\expandafter\@slowromancap\romannumeral #1@}
\newcommand{\de}{\varepsilon}
\newcommand{\p}{\phi}
\newcommand{\D}{\nabla}
\newcommand{\Lap}{\Delta}
\newcommand{\eps}{\varepsilon}
\newcommand{\s}{\sigma}
\newcommand{\ka}{\kappa}
\newcommand{\Rc}{\textrm{Rc}}
\newcommand{\graph}{\mathrm{graph}}
\providecommand{\abs}[1]{\lvert #1\rvert}
\begin{document}
\maketitle

\begin{abstract}
We prove two new estimates for the level set flow of mean convex domains in Riemannian manifolds. Our estimates give control - exponential in time - for the infimum of the mean curvature, and the ratio between the norm of the second fundamental form and the mean curvature. 
In particular, the estimates remove a stumbling block that has been left after the work of White \cite{white_size,white_nature,white_subsequent}, and Haslhofer-Kleiner \cite{HK_meanconvex}, and thus allow us to extend the structure theory for mean convex level set flow to general ambient manifolds of arbitrary dimension.
\end{abstract}

\section{Introduction}
Let $N$ be a Riemannian manifold. For any mean convex domain $K_0 \subset N$ we consider the level set flow $\{K_t\}_{t\geq 0}$ starting at $K_0$, i.e. the maximal family of closed sets starting at $K_0$ that satisfies the avoidance principle when compared with any smooth mean curvature flow \cite{evans-spruck,CGG,Ilmanen}. The level set flow of $K_0$ coincides with the smooth mean curvature flow of $K_0$ for as long as the latter is defined, but provides a canonical way to continue the evolution beyond the first singular time. Mean convexity is preserved also beyond the first singular time in the  sense that $K_{t_2}\subseteq K_{t_1}$ whenever $t_2\geq t_1$.

In the last 15 years, Brian White developed a deep regularity and structure theory for mean convex level set flow \cite{white_size,white_nature,white_subsequent}, and recently the first author and Kleiner gave a new treatment of this theory \cite{HK_meanconvex}. Concerning the size of the singular set, White proved that the singular set $\mathcal{S}\subset N^n\times \mathbb{R}$ of any mean convex flow has parabolic Hausdorff dimension at most $n-2$ \cite[Thm. 1.1]{white_size}, see also \cite[Thm. 1.15]{HK_meanconvex}. Concerning the structure of the singular set, the main assertion one wants to prove is that all blowup limits of a mean convex flow are smooth and convex until they become extinct. In particular, one wants to conclude that all tangent flows of a mean convex flow are round shrinking spheres, round shrinking cylinders, or static planes of multiplicity one.
While the theorem about the size of the singular set is known in full generality, the structure theorem has been proved up to now only under some additional assumptions \cite[Thm. 1]{white_nature}, \cite[Thm. 3]{white_subsequent} and \cite[Thm. 1.14]{HK_meanconvex}. Namely one has to restrict either to blowups at the first singular time, or to low dimensions, or to the case where the ambient manifold is Euclidean space.

As explained in \cite[Appendix B]{white_subsequent}, the missing step to extend the structure theorem to general ambient manifolds of arbitrary dimension is to prove that the ratio between the smallest principal curvature $\lambda_1$ and the mean curvature $H$ has a finite lower bound on the regular points contained in any compact subset of space-time.

The purpose of this work is to remove this stumbling block. To this end, we prove two new estimates for the level set flow of mean convex domains in Riemannian manifolds.

To state our estimates, we denote by $\partial K_t^\textrm{reg}$ the set of regular boundary points at time $t$.
Our first main estimate gives a lower bound for the mean curvature.

\begin{theorem}[Lower bound for $H$]\label{estimate1}
There exist constants $H_0=H_0(K_0)>0$ and $\rho=\rho(K_0)<\infty$ such that
\begin{equation}
\inf_{\partial K_t^\textrm{reg}}H\geq H_0 e^{-\rho t}.
\end{equation}
\end{theorem}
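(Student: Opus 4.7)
The plan is to apply the parabolic maximum principle to the evolution equation for the mean curvature on the regular part of the level set flow. Since $K_0$ is compact, the avoidance principle applied against a large shrinking geodesic ball confines the flow to a bounded region $U\subset N$ on which $\Rc\geq -\rho$ for some finite $\rho$. On $\partial K_t^{\mathrm{reg}}$ the function $H$ is smooth and obeys
\[
\partial_t H \;=\; \Delta H + \bigl(|A|^2+\Rc(\nu,\nu)\bigr)H \;\geq\; \Delta H - \rho H,
\]
so $u:=e^{\rho t}H$ is a supersolution of the heat equation at every regular point. With $H_0:=\min_{\partial K_0}H>0$, the estimate is equivalent to $\inf_{\partial K_t^{\mathrm{reg}}}u\geq H_0$ for all $t\geq 0$. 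For a smooth compact mean convex MCF this is immediate from the parabolic maximum principle; the content of the theorem is precisely to propagate this bound past singular times of the level set flow.

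The strategy is contradiction. Set
\[
T^{\ast}:=\sup\bigl\{\,t\geq 0 : \inf_{\partial K_s^{\mathrm{reg}}}u\geq H_0 \text{ for every } s\leq t\,\bigr\}
\]
and assume $T^{\ast}<\infty$. Then there exist $t_n\to T^{\ast}$ and regular points $x_n\in\partial K_{t_n}^{\mathrm{reg}}$ with $u(x_n,t_n)\leq H_0-\delta$ for some fixed $\delta>0$, and, after passing to a subsequence, $(x_n,t_n)\to(x_{\ast},T^{\ast})$ in $N\times\mathbb{R}$. If $(x_\ast,T^\ast)$ is a regular space--time point, the flow is smooth on a parabolic neighborhood $P$ of it, so $u$ is continuous near $(x_\ast,T^\ast)$; the definition of $T^{\ast}$ gives $u\geq H_0$ on $P\cap\{t<T^{\ast}\}$, whereas continuity yields $u(x_\ast,T^\ast)\leq H_0-\delta<H_0$, a contradiction. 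If instead $(x_{\ast},T^{\ast})$ is singular, we would invoke the $\alpha$-noncollapsing of mean convex level set flow in the Riemannian ambient and the resulting estimate $|A|\leq CH$ on the regular part, together with an $\varepsilon$-regularity theorem: if $H(x_n,t_n)$ were bounded then so would $|A|(x_n,t_n)$ be, and pseudolocality/Brakke regularity would then produce a smooth $C^2$-neighborhood of $(x_\ast,T^\ast)$, contradicting that it is singular. Hence $H(x_n,t_n)\to\infty$, contradicting $u(x_n,t_n)\leq H_0-\delta$.

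The main obstacle is therefore the singular case: one must rule out that the infimum of $H$ on the regular part escapes to a space--time singularity. This is what the mean convex noncollapsing and accompanying local curvature estimates for level set flow in Riemannian ambients are designed to deliver, as they force the mean curvature to blow up along any sequence of regular points converging to a singular one. Granted this input, the rest of the argument is a standard parabolic maximum-principle contradiction, yielding the exponential lower bound with $\rho$ determined by the Ricci curvature of $N$ on the bounded region swept out by the flow.
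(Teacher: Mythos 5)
Your argument is circular at the crucial step. To rule out the possibility that the infimum of $H$ on $\partial K_t^{\textrm{reg}}$ escapes to a singular space--time point, you invoke $\alpha$-noncollapsing of mean convex level set flow in general Riemannian ambients together with the ensuing estimate $|A|\leq CH$ on regular points. But those are exactly the conclusions this paper is written to supply: the bound on $|A|/H$ is Theorem \ref{estimate2}, the noncollapsing is Corollary \ref{cor_noncollapsing}, and both are deduced \emph{from} Theorem \ref{estimate1} (via Theorem \ref{structurethm}). As the introduction explains, prior to this paper neither noncollapsing nor curvature control on the regular part was known for mean convex level set flow in a general ambient manifold of arbitrary dimension --- that missing input is the stumbling block the theorem is meant to remove. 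Taking it as a hypothesis assumes what is to be proved. Even if one were handed the $|A|\leq CH$ bound, the further step from a pointwise $|A|$ bound along a sequence $(x_n,t_n)\to(x_\ast,T^\ast)$ to a full $C^2$-regular parabolic neighborhood of $(x_\ast,T^\ast)$ would itself require a local curvature estimate of one-sided minimal type, which is again a downstream consequence rather than an available input.

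The paper sidesteps all of this by never applying a maximum principle to the (possibly singular) flow itself. Instead it builds smooth double approximators $u_{\eps,\sigma}$ solving the Dirichlet problem \eqref{double_reg}, where the zeroth-order term $\sigma u_{\eps,\sigma}$ supplies the a priori sup bound that makes the continuity method close. Writing $V=H+\sigma u_{\eps,\sigma}$ on the graph $M^{\eps,\sigma}$, the elliptic maximum principle applied to $Vw$ with $w=e^{\eps\rho z}$ gives the interior estimate (Proposition \ref{V_better_bd}), a barrier argument gives a uniform boundary gradient bound (Lemma \ref{boundary_grad_est2}), and together they yield Theorem \ref{V_bd_2}. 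The singular flow only appears after two limits, $\sigma\to 0$ and then $\eps\to 0$, at which point the estimate is passed to $\partial K_t^{\textrm{reg}}$ using Brakke-flow compactness and the local regularity theorem (Section \ref{pass_lim}). In short, the maximum principle is run on smooth elliptic graphs rather than on the parabolic level set flow, precisely so that no regularity input about singular points is ever needed.
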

Our estimate from Theorem \ref{estimate1}, as well as our second main estimate below, depends exponentially on time. It is clear from simple examples (e.g. flows in hyperbolic space), that this exponential behavior in time is the best one can possibly get.

Our second main estimate controls the ratio between the norm of the second fundamental form and the mean curvature.

\begin{theorem}[Upper bound for $\abs{A}/H$]\label{estimate2}
There exist constants $C=C(K_0)<\infty$ and $\rho=\rho(K_0)<\infty$ such that
\begin{equation}
\inf_{ \partial K_t^\textrm{reg}}\frac{\abs{A}}{H}\leq C e^{\rho t}.
\end{equation}
\end{theorem}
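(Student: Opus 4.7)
The strategy is to exhibit, at each time $t$, a regular boundary point $p_t \in \partial K_t^{\mathrm{reg}}$ at which $|A|/H$ is controlled purely in terms of $H$; combined with Theorem~\ref{estimate1}, this gives the exponential bound. The natural geometric candidate is a point where $\partial K_t$ touches a smallest enclosing geodesic ball from the inside, since at such a point the principal curvatures of $\partial K_t$ must dominate those of the geodesic sphere and so admit a uniform lower bound.

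First I would note that mean convexity is preserved under the level set flow, so $K_t \subseteq K_0$ for all $t \geq 0$, keeping the evolution in a fixed compact region of $N$. For each $t$ less than the extinction time (which is positive by Theorem~\ref{estimate1}), let $B_{r(t)}(q_t)$ be a geodesic ball of minimal radius containing $K_t$ and pick a touching point $p_t \in \partial K_t \cap \partial B_{r(t)}(q_t)$. Since $r(t) \leq \mathrm{diam}(K_0)$ and ambient sectional curvatures are bounded on a compact neighborhood of $K_0$, the principal curvatures of the model surface $\partial B_{r(t)}$ are bounded below by some $-C_1 = -C_1(K_0,N) < \infty$. The standard comparison of second fundamental forms at a one-sided interior touching point then gives $\lambda_i(p_t) \geq -C_1$ for every principal curvature of $\partial K_t$ at $p_t$.

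Combined with mean convexity $H(p_t) = \sum_i \lambda_i(p_t) > 0$, this yields $\lambda_i(p_t) \leq H(p_t) + (n-1)C_1$ for each $i$, hence $|A|^2(p_t) \leq n\bigl(H(p_t) + (n-1)C_1\bigr)^2$. Dividing by $H(p_t)^2$ and invoking the lower bound $H(p_t) \geq H_0 e^{-\rho t}$ from Theorem~\ref{estimate1}, I conclude
\[
\frac{|A|}{H}(p_t) \;\leq\; \sqrt{n} + \frac{C_2}{H(p_t)} \;\leq\; C e^{\rho t},
\]
which bounds $\inf_{\partial K_t^{\mathrm{reg}}} |A|/H$ as desired.

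The main obstacle is ensuring that $p_t$ actually lies in the regular set: a priori the touching set $\partial K_t \cap \partial B_{r(t)}(q_t)$ could be entirely contained in the singular set of the flow. To handle this, I would either approximate $K_0$ by smooth strictly mean convex domains $K_0^{(k)}$, establish the estimate for their smooth mean curvature flows by the comparison above, and pass to the level set limit using stability properties; or use that the singular set has parabolic Hausdorff dimension at most $n-2$ (White \cite{white_size}) to produce regular touching points after slightly enlarging the ball, incurring only a bounded loss in $C_1$. Both routes rely on a careful application of the regularity theory for mean convex level set flow developed in \cite{white_size,HK_meanconvex}, and I expect this regularization step to carry essentially all of the technical weight of the argument.
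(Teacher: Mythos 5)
The ``$\inf$'' in the displayed inequality is a typo for ``$\sup$'', and this is the crux of the problem with your proposal. The evidence is overwhelming: the theorem is titled ``Upper bound for $|A|/H$''; the sentence following it says the estimate ``shows that all principal curvatures are controlled by the mean curvature, and thus in particular provides a (two-sided) bound for the ratio $\lambda_1/H$''; the introduction identifies the missing step in White's theory as a finite lower bound for $\lambda_1/H$ \emph{at all regular points} in a compact subset of space-time; and the paper's own proof (Theorem~\ref{main_curv_bound} followed by Theorem~\ref{u_hat_lim}) establishes a pointwise upper bound for $|A|/(H+\sigma u_{\eps,\sigma})$ at \emph{every} point of the approximating graph, hence a supremum bound after passing to the limit. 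An infimum bound would be useless for the structure theorem, the noncollapsing corollary, and the inscribed-radius estimate, all of which require curvature control everywhere on $\partial K_t^{\mathrm{reg}}$.

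Your proof takes the printed ``$\inf$'' at face value and constructs a single touching point $p_t$ at which $|A|/H$ is controlled. Even granting the regularity of $p_t$ (which you flag as the main difficulty but do not resolve --- the singular set being codimension two does not by itself force the touching set of a smallest enclosing ball to meet the regular part, and the approximation route would require you to push the estimate through a limit in which regularity is exactly what is being lost), this would establish only the essentially trivial infimum bound, not the intended supremum bound. A single well-behaved touching point says nothing about $|A|/H$ near a developing neckpinch, which is precisely where the estimate matters. No variant of the touching-point comparison can work here, because the supremum bound must rule out $|A|/H$ becoming large \emph{anywhere}, and one-sided comparison with a ball only controls the principal curvatures from below, not from above.

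The paper proves the supremum bound by an elliptic maximum-principle argument on the graphs $M^{\eps,\sigma}=\graph(u_{\eps,\sigma}/\eps)$ of the double approximators. The quantity
\begin{equation*}
G=\frac{|A|+\Lambda\sigma u_{\eps,\sigma}}{(H+\sigma u_{\eps,\sigma})\,e^{\rho u_{\eps,\sigma}}}
\end{equation*}
is shown to be bounded on the whole graph (Theorem~\ref{main_curv_bound}) using the evolution-type identity for $Vw$ (Corollary~\ref{cor_evolution_vw}), Simons' inequality (Proposition~\ref{A_lap}), an improved Kato inequality at points where $|A|/H$ is large (Proposition~\ref{strong_kato}), and the lower bound on $V=H+\sigma u_{\eps,\sigma}$ from Theorem~\ref{V_bd_2}; the estimate then survives the limits $\sigma\to 0$, $\eps\to 0$ in Section~\ref{pass_lim}. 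This is a fundamentally different mechanism from a one-point barrier comparison, and it is the one that actually delivers a bound at every regular point.
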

Theorem \ref{estimate2} shows that all principal curvatures are controlled by the mean curvature, and thus in particular provides a (two-sided) bound for the ratio $\lambda_1/H$.
As explained above, this exactly fills in the missing piece that is needed to extend the structure theorem for mean convex level set flow to the general case without restrictions on subsequent singularities, the ambient manifold, and the dimension. We thus obtain:

\begin{theorem}[Structure theorem]\label{structurethm}
Let $K_0\subset N$ be a mean convex domain in a Riemannian manifold. Then all blowup limits of its level set flow $\{K_t\}_{t\geq 0}$ are smooth and convex until they become extinct. In particular, all backwardly selfsimilar blowup limits are round shrinking spheres, round shrinking cylinders, or static planes of multiplicity one.
\end{theorem}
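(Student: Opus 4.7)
My plan is to reduce Theorem~\ref{structurethm} to the Euclidean structure theorem \cite[Thm.~1.14]{HK_meanconvex} by a blowup argument, using Theorems~\ref{estimate1} and~\ref{estimate2} to supply precisely the uniform control on $\lambda_1/H$ over compact subsets of space-time whose absence, as explained in \cite[Appendix~B]{white_subsequent}, was the last remaining obstruction in the general Riemannian case.

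First I would combine the two main estimates. On any compact time interval $[0,T]$, Theorem~\ref{estimate2} yields $\abs{A}/H \leq Ce^{\rho T}$ on the regular set, so each principal curvature satisfies $\abs{\lambda_i} \leq Ce^{\rho T} H$, and in particular $\lambda_1 \geq -Ce^{\rho T} H$, while Theorem~\ref{estimate1} keeps $H$ bounded away from zero so the ratio is well-defined. Together these give a uniform two-sided bound on $\lambda_1/H$ over compact space-time regions --- exactly the input identified as missing in \cite[Appendix~B]{white_subsequent}.

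Next I would run the blowup argument. For any space-time point $(p_0,t_0)$ and any sequence of scales $\lambda_i \to \infty$, rescale the level set flow by $\lambda_i$ in geodesic normal coordinates around $p_0$. The rescaled metrics $\lambda_i^2 g$ converge smoothly on compact sets to the flat metric on $\R^n$, and by the compactness theory developed in \cite{HK_meanconvex}, after passing to a subsequence one obtains an ancient mean convex level set flow $\{K_t^\infty\}$ in $\R^n$. The bound $\lambda_1/H \geq -C$ is scale-invariant and passes to the regular part of the limit, so $\{K_t^\infty\}$ satisfies the hypotheses of \cite[Thm.~1.14]{HK_meanconvex}. That theorem then yields that $\{K_t^\infty\}$ is smooth and convex until it becomes extinct, and any backwardly self-similar blowup must be a round shrinking sphere, a round shrinking cylinder, or a static multiplicity-one plane, by Huisken's classification of mean convex self-shrinkers.

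The main obstacle I foresee is the careful bookkeeping in the blowup step: making precise that smooth convergence at regular points and the pointwise transfer of $\lambda_1/H$ to the limit both survive the rescaling of the Riemannian ambient, and that the local regularity theorem applies uniformly along the sequence so that regular points of $\{K_t^\infty\}$ correspond to nested sequences of regular points of the approximations on which $\abs{A}/H$ remains bounded and $H$ remains bounded away from zero. Once this is in place, the contributions of the ambient Riemann curvature decay to zero in the limit and the remaining argument reduces entirely to the Euclidean setting of \cite{HK_meanconvex}.
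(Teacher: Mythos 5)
Your reading of the logical structure is correct: the missing ingredient isolated by White in \cite[Appendix~B]{white_subsequent} is a lower bound for $\lambda_1/H$ at regular points in compact space-time regions, and Theorem~\ref{estimate2} (together with Theorem~\ref{estimate1}, which keeps $H$ positive on the regular set) supplies exactly that, since $\abs{\lambda_1}\leq\abs{A}\leq Ce^{\rho t}H$. The paper then concludes Theorem~\ref{structurethm} directly by feeding this bound into White's machinery \cite{white_size,white_nature,white_subsequent}, which is already formulated for general Riemannian ambient manifolds. No blowup-to-Euclidean reduction is performed; indeed the paper explicitly states that Theorem~\ref{structurethm} \emph{generalizes} \cite[Thm.~1.14]{HK_meanconvex}, so it would be logically backwards to derive the former from the latter.

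Your detour through \cite[Thm.~1.14]{HK_meanconvex} also has a circularity problem that you should flag. The compactness theory and the structure theorem in \cite{HK_meanconvex} are built on the Andrews noncollapsing condition as a standing assumption, and noncollapsing for mean convex level set flow in a general Riemannian manifold is precisely Corollary~\ref{cor_noncollapsing}, which the paper derives \emph{from} Theorem~\ref{structurethm}. Thus invoking the HK compactness theory to extract the ancient Euclidean limit, or appealing to Theorem~1.14 to classify it, presupposes a result that sits downstream of the one you are proving. Moreover, a scale-invariant two-sided bound on $\lambda_1/H$ on the regular part of an arbitrary ancient Brakke flow in $\R^n$ is not the hypothesis under which Theorem~1.14 is proved, so the last step of your reduction would not go through even granting the limit. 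The clean way to close the argument is to bypass \cite{HK_meanconvex} at this stage and cite White's Riemannian-ambient arguments directly, exactly as the paper does; those take the $\lambda_1/H$ bound, not noncollapsing, as their input.
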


Theorem \ref{structurethm} gives a general description of the nature of singularities of mean convex level set flow in arbitrary ambient manifolds. As mentioned above, this generalizes the structure theorems from \cite[Thm. 1]{white_nature}, \cite[Thm. 3]{white_subsequent} and \cite[Thm. 1.14]{HK_meanconvex}.\\

\noindent\textbf{Applications.} Let us now discuss some applications of the above theorems.

Our first application concerns topological changes in mean convex mean curvature flow. In \cite{White_topo_change}, White proved that under mean convex level set flow elements of the $m$-th homotopy group of the complementary region can die only if there is a shrinking $S^k\times \mathbb{R}^{n-1-k}$ singularity for some $k\leq m$, assuming that $n\leq7$ or that the ambient manifold is Euclidean. Thanks to Theorem \ref{structurethm} we can remove the assumption on the dimension and the ambient manifold, and thus obtain:

\begin{corollary}[Topological change]
Let $K_0\subset N^{n}$ be a mean convex domain in a Riemannian manifold. If for some $0\leq t_1<t_2$ there is a map of the $m$-sphere into $N\setminus K_{t_1}$ that is homotopically trivial in $N\setminus K_{t_2}$ but not in $N\setminus K_{t_1}$, then at some $t\in [t_1,t_2)$ there is a singularity of the flow at which the tangent flow is a shrinking $S^k\times \mathbb{R}^{n-1-k}$ for some $k\in \{1,\ldots,m\}$.
\end{corollary}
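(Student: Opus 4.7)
The plan is to deduce this corollary directly from Theorem~\ref{structurethm} combined with the topological argument of White in \cite{White_topo_change}. In that paper, White establishes precisely the conclusion stated in the corollary under the standing hypothesis that every tangent flow of a mean convex level set flow is a round shrinking sphere, a round shrinking cylinder $S^k\times\mathbb{R}^{n-1-k}$, or a static multiplicity-one plane. This hypothesis was available in \cite{White_topo_change} only when $n\leq 7$ or when $N=\mathbb{R}^n$, which is why the prior statement of the topological change theorem carried those restrictions. Theorem~\ref{structurethm} of the present paper verifies this hypothesis in full generality, so my plan is to invoke it and then apply White's argument verbatim.

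For orientation, I would organize White's topological argument as follows. Along the regular part of the flow the complement $N\setminus K_t$ evolves by ambient diffeomorphism, so no homotopy class of $N\setminus K_t$ can be killed at a regular time. Consequently, if a class $[f]\in\pi_m(N\setminus K_{t_1})$ becomes trivial in $\pi_m(N\setminus K_{t_2})$, then some singular time $t\in[t_1,t_2)$ must be responsible. Blowing up at such a singularity and applying Theorem~\ref{structurethm}, the tangent flow is either a static multiplicity-one plane (which produces no topological change in the complement) or a round shrinking $S^k\times \mathbb{R}^{n-1-k}$. In the latter case the passage through the singularity amounts to a $(k+1)$-handle surgery on $\partial K_t$, and a standard topological argument, carried out by White, shows that such a surgery can kill a class in $\pi_m(N\setminus K_t)$ only when $k\leq m$. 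This yields the desired conclusion.

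I do not anticipate any substantial obstacle in executing this plan: the only ingredient missing from White's earlier treatment was precisely the full classification of tangent flows of mean convex level set flow in arbitrary dimension and arbitrary ambient manifold, and this is exactly what Theorem~\ref{structurethm} supplies. The remainder is a direct transcription of the argument of \cite{White_topo_change}.
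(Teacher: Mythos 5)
Your proposal is correct and follows the same route the paper takes: the paper does not give a separate proof for this corollary, but simply observes (in the paragraph preceding its statement) that Theorem~\ref{structurethm} supplies the classification of tangent flows that was the only missing hypothesis in White's argument from \cite{White_topo_change}, and that White's topological argument then applies verbatim. Your sketch of White's argument is a reasonable gloss, and the overall logic matches the paper's.
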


Our second application concerns the estimates for mean convex level set flow in the setting of Haslhofer-Kleiner \cite{HK_meanconvex}. These estimates are based on the noncollapsing condition that each boundary point admits interior and exterior balls of radius comparable to the reciprocal of the mean curvature at that point \cite{white_size,sheng_wang,andrews1}. It has been unknown up to now if this noncollapsing condition holds for mean convex level set flow in general ambient manifolds of arbitrary dimension. Combining Theorem \ref{estimate1} and Theorem \ref{structurethm} we can answer this in the affirmative:

\begin{corollary}[Noncollapsing]\label{cor_noncollapsing}
Let $K_0\subset N^{n}$ be a mean convex domain in a Riemannian manifold. Then there exists a positive nonincreasing function $\alpha:[0,\infty)\to (0,\infty)$ such that each $p\in \partial K_t^\textrm{reg}$ admits an interior and exterior ball tangent at $p$ of radius at least $\alpha(t)/H(p,t)$. In particular, all estimates from \cite{HK_meanconvex} apply in the setting of mean convex level set flow in general ambient manifolds of arbitrary dimension.
\end{corollary}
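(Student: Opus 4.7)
The plan is to argue by contradiction using Theorems \ref{estimate1} and \ref{structurethm}. First I would suppose the corollary fails: there exist $T<\infty$ and a sequence of regular boundary points $p_i\in \partial K_{t_i}^{\textrm{reg}}$ with $t_i\in[0,T]$ such that the largest radius $r_i$ of an interior ball tangent at $p_i$ inside $K_{t_i}$ satisfies $r_i H(p_i,t_i)\to 0$ (the case of exterior balls being analogous). Theorem \ref{estimate1} gives $H(p_i,t_i)\geq H_0 e^{-\rho T}>0$, which forces $r_i \to 0$.

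I would then parabolically rescale the flow centered at $(p_i,t_i)$ by the factor $\lambda_i:=1/r_i\to\infty$. Since the ambient Riemannian curvature scales by $r_i^2\to 0$, after passing to a subsequence and applying local compactness for weak set flows, the rescaled flows converge to a limit weak set flow $\mathcal{F}_\infty$ in Euclidean space $\mathbb{R}^n$. By construction $\mathcal{F}_\infty$ is itself a blowup limit of $\{K_t\}$, so Theorem \ref{structurethm} ensures that $\mathcal{F}_\infty$ is smooth and convex until extinction. Since the rescaled mean curvature at the origin equals $r_i H(p_i,t_i)\to 0$, the limit satisfies $H(0,0)=0$, and the strong parabolic maximum principle applied to $\partial_t H=\Delta H+\abs{A}^2 H$ on a smooth convex flow then forces $H\equiv 0$, so $\mathcal{F}_\infty$ is a static multiplicity-one hyperplane.

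Smoothness of the convergence near the origin (again a consequence of the structure theorem together with the scale-invariant bound $\abs{A}\leq CH$ from Theorem \ref{estimate2}) implies that $\partial K_{t_i}$, rescaled by $\lambda_i$, is $C^2$-close to this hyperplane in a definite neighborhood of the origin. Hence for large $i$ an interior ball of radius $2r_i$ tangent at $p_i$ fits inside $K_{t_i}$, contradicting the choice of $r_i$ as the largest such radius. This produces the noncollapsing function $\alpha(t)$ (made nonincreasing by taking a running infimum), and the ``in particular'' statement then follows because the estimates of \cite{HK_meanconvex} are formulated for $\alpha$-noncollapsed flows with a constant $\alpha$, so one applies them on each interval $[0,T]$ with the uniform constant $\alpha(T)$.

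The main obstacle will be to justify that $\mathcal{F}_\infty$ has enough regularity near the spacetime origin to run the strong maximum principle for $H$ and to deduce the $C^2$-closeness needed in the final step. This is precisely where the full force of Theorem \ref{structurethm}, namely smoothness and convexity of all blowup limits, is required, and where one cannot afford to quote only the tangent-flow classification.
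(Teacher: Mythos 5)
The paper does not actually spell out a proof of this corollary, so I will simply assess your argument on its own merits. Your blowup-by-contradiction strategy is a reasonable one, and several of the steps are fine (the reduction to $r_i\to 0$ via Theorem \ref{estimate1}, the use of Theorem \ref{estimate2} to get curvature control at the scale of $H$, the observation that the rescaled mean curvature at the basepoint tends to $0$). But there is a genuine gap at the point where you pass from ``the blowup is smooth, convex, and has $H(0,0)=0$'' to ``the blowup is a static multiplicity-one hyperplane''.

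The strong maximum principle applied to $\partial_t H=\Delta H+|A|^2H$ only forces $H\equiv 0$ on the connected component of the limiting hypersurface through the spacetime origin, and hence shows that \emph{this component} is a plane $P$. It does not rule out that the limit domain is a \emph{slab} bounded by $P$ and a second, disjoint boundary component. A static slab is smooth, convex, never becomes extinct, and is an ancient mean convex weak set flow, so Theorem \ref{structurethm} as stated is perfectly consistent with it being a blowup limit. And the slab is exactly the scenario one worries about: your inscribed ball of (rescaled) radius $1$ at the origin must by maximality touch $\partial K_{t_i}$ at a second point $q_i$ at rescaled distance $\le 2$, which in the limit produces a second boundary sheet at finite distance. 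In that case your final step fails: the rescaled boundary is $C^2$-close to $P$ in a neighborhood of the origin, but a ball of rescaled radius $2$ is obstructed by the second sheet, so there is no contradiction with maximality of $r_i$.

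To close the gap one needs to exclude the slab, which requires an input beyond ``smooth and convex.'' One option is to invoke the unit-density/embeddedness of blowup limits together with a different choice of scale. Note that $(p_i,t_i)$ must converge (after a subsequence) to a singular spacetime point: otherwise the flow is smooth in a parabolic neighborhood of the limit, forcing a uniform lower bound on the inscribed radius, contradicting $r_i\to 0$. At a singular limit point, $H(p_i,t_i)\to\infty$. If you then rescale by $H(p_i,t_i)$ rather than $1/r_i$, the basepoint has rescaled $H=1$ and $|A|\le C$ (Theorem \ref{estimate2}), while the touching point $q_i$ lies at rescaled distance $\le 2\,r_i H(p_i,t_i)\to 0$. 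In the limit two boundary sheets would coincide at the origin, producing a multiplicity-two plane, which \emph{is} excluded by the smooth convex (hence embedded, multiplicity-one) conclusion of Theorem \ref{structurethm}. Alternatively, one can appeal to a one-sided area-minimization property of mean convex level set flow, which a static slab violates. Either way, as written your proof relies on a dichotomy (plane versus slab) that the strong maximum principle alone does not resolve, and this is where the argument needs to be repaired.
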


\begin{remark}
We conjecture that the conclusion of Corollary \ref{cor_noncollapsing} actually holds for some $\alpha(t)\geq \alpha_0e^{-\rho t}$ for some $\alpha_0=\alpha_0(K_0)>0$ and $\rho=\rho(K_0)<\infty$. It would also be interesting to find a proof of the noncollapsing which is independent of Theorem \ref{structurethm}.
\end{remark}

Our third application concerns a sharp estimate for the inscribed and outer radius for mean convex level set flow in Riemannian manifolds. In \cite{Brendle_inscribed} and \cite{Brendle_inscribed_manifolds}, Brendle proved sharp bounds for the inscribed radius and outer radius at points in a smooth mean convex mean curvature flow where the mean curvature is large. The first author and Kleiner \cite{HK_inscribed} found a shorter proof of Brendle's estimate, which also works in the nonsmooth setting provided that one has some noncollapsing parameter to get started. Thanks to Corollary \ref{cor_noncollapsing} the argument from \cite{HK_inscribed} is applicable for mean convex level set flow in general ambient manifolds, and we thus obtain:

\begin{corollary}[Sharp estimate for inscribed and outer radius]
Let $K_0\subset N$ be a mean convex domain in a Riemannian manifold. Then for any positive nonincreasing function $\delta:[0,\infty)\to (0,\infty)$, there exists a positive nonincreasing function $H_0:[0,\infty)\to (0,\infty)$ depending only on $K_0$ and $\delta$ such that every $p\in K_t^\textrm{reg}$ with $H(p,t)\geq H_0(t)$ admits an interior ball of radius at least $\tfrac{1}{(1+\delta(t))H(p,t)}$ and an exterior ball of radius at least $\tfrac{1}{\delta(t)H(p,t)}$.
\end{corollary}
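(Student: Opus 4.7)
The plan is to apply the machinery of Haslhofer--Kleiner \cite{HK_inscribed} with the noncollapsing constant supplied by Corollary \ref{cor_noncollapsing}. By design, the argument in \cite{HK_inscribed} takes any starting noncollapsing parameter as input and returns Brendle's sharp inscribed/outer radius estimate, and it was engineered precisely to function in the nonsmooth level set flow setting. Since all hypotheses and conclusions depend monotonically on $t$, it suffices to prove the estimate on each compact time interval $[0,T]$ with constants allowed to depend on $T$; the function $H_0$ can then be assembled in the required way.

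I would proceed by contradiction in the standard blowup manner. Suppose the interior estimate fails on some fixed $[0,T]$, so that there exist regular points $p_k \in \partial K_{t_k}^{\textrm{reg}}$ with $t_k \in [0,T]$, $H_k := H(p_k,t_k) \to \infty$, and inscribed radius strictly less than $\tfrac{1}{(1+\delta(t_k))H_k}$. Parabolically rescale the flow by $H_k$ around $(p_k,t_k)$: the ambient Riemannian metric smoothly converges to flat $\R^n$ on compact sets, and by Corollary \ref{cor_noncollapsing} the rescaled flows are uniformly $\alpha_T$-noncollapsed. White's local regularity theorem for noncollapsed mean convex flows (as used in \cite{HK_meanconvex}) then yields smooth subsequential convergence on the regular set to an ancient $\alpha_T$-noncollapsed mean convex mean curvature flow in $\R^n$ whose inscribed radius at the origin violates the sharp asymptotic estimate. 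This contradicts the sharp inscribed radius bound for ancient noncollapsed flows in Euclidean space, established in \cite{HK_inscribed} via Brendle's two-point maximum principle. The outer radius claim follows by the same scheme with inside and outside interchanged.

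The only genuinely new item to verify is that the Euclidean argument of \cite{HK_inscribed} transfers to general Riemannian backgrounds. This boils down to two routine facts on any compact time interval: parabolic blow-ups indeed produce a limit flow in flat $\R^n$ (bounded geometry plus rescaling), and the $\alpha_T$-noncollapsing from Corollary \ref{cor_noncollapsing}, combined with White's local regularity, gives smooth convergence strong enough to pass the inscribed radius inequality to the limit. I expect this verification, rather than any substantively new analytic input, to be the only obstacle; once it is granted, the construction of the interior and exterior balls inside the limit Euclidean ancient flow is identical to the one carried out in \cite{HK_inscribed}.
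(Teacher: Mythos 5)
Your first paragraph is, in essence, the paper's entire proof, which is a one-line citation: the argument of \cite{HK_inscribed} was built to run on nonsmooth mean convex level set flow once a noncollapsing parameter is available, Corollary \ref{cor_noncollapsing} now supplies that parameter, and the transfer to Riemannian ambients is not an obstacle because Brendle already carried the two-point maximum principle over to curved backgrounds in \cite{Brendle_inscribed_manifolds}. So far you are aligned with the paper.

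Your second and third paragraphs, however, replace this citation with a heavier blowup-and-contradiction route that the paper does not use, and the sketch has genuine gaps. First, \cite{HK_inscribed} does not hand you ``the sharp inscribed radius bound for ancient noncollapsed flows in Euclidean space'' as a stand-alone black box; its estimate is stated and proved directly on the flow (given a noncollapsing parameter), and extracting a clean statement for ancient limits already requires a separate scaling argument of the very kind you are trying to set up, so the reduction risks being circular or at least needs to be made explicit. Second, the inscribed and outer radii are nonlocal quantities (suprema over tangent balls), so the claim that they pass to the parabolic blowup limit under locally smooth convergence on the regular set is not routine: one must rule out the nearly-optimal tangent ball drifting off to spatial infinity after rescaling, which requires an additional quantitative input beyond local smooth convergence (the $\alpha$-noncollapsing from Corollary \ref{cor_noncollapsing} can be used for this, but you do not say how). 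Neither point is fatal, but both need real work, whereas the paper's intended proof is simply your opening paragraph with no blowup at all.
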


\noindent\textbf{Outline.} To finish this introduction, let us now describe some of the key ideas behind the proofs of our two main estimates (Theorem \ref{estimate1} and Theorem \ref{estimate2}).

The estimates are very easy to prove for smooth flows, so let us start by explaining this:
First, from the evolution equation for the mean curvature \cite[Cor. 3.5]{Huisken_manifolds},
\begin{equation}
\partial_t H=\Lap H+\abs{A}^2 H + \textrm{Rc}(\nu,\nu)H,
\end{equation}
and the maximum principle, one sees that the minimum of the mean curvature can deteriorate at most exponentially in time. Second, combining the evolution equation for the square norm of the second fundamental form \cite[Cor. 3.5]{Huisken_manifolds},
\begin{align}
\partial_t \abs{A}^2=&\Lap\abs{A}^2-2\abs{\nabla A}^2+2\abs{A}^4+2 \textrm{Rc}(\nu,\nu)\abs{A}^2\nonumber\\
&-4(h_{ij}h_{jm}R_{mlil}-h_{ij}h_{lm}R_{milj})-2h_{ij}(\nabla R_{0lil}+\nabla_l R_{0ijl}),
\end{align}
and the evolution equation for the mean curvature, one sees that the maximum of $\abs{A}/H$ increases at most exponentially in time.
We emphasize that the above estimates crucially rely on one another. Namely, to control the reaction terms in the evolution for $\abs{A}/H$ we need the lower bound for $H$ from the first step.

Having sketched the argument in the smooth case, the main difficulty is to generalize this argument to the level set flow beyond the first singular time. As in White \cite{white_subsequent}, a natural first approach to try would be to use elliptic regularization. Recall that the time of arrival function $u$ of a mean convex flow $\{ K_t\}_{t\geq 0}$ is defined by $u(x)=t$ if and only if $x\in\partial K_t$. For mean convex flows in Euclidean space, the time of arrival function $u:K_0\to \mathbb{R}$ is a bounded real valued function with domain $K_0$, and can be approximated by solutions of the Dirichlet problem
\begin{align}
\textrm{div}\left(\frac{Du_\eps}{\sqrt{\eps^2+\abs{Du_\eps}^2}}\right)+\frac{1}{\sqrt{\eps^2+\abs{Du_\eps}^2}}=0  & \qquad \textrm{in} \,\,\textrm{Int}(K_0),\nonumber\\
u_\eps=0 &\qquad \textrm{on} \,\,\partial K_0.
\end{align}
The elliptic regularization technique has been known for a long time \cite{evans-spruck,CGG}, see also \cite{Ilmanen}, and arguing as in \cite{white_subsequent,HK_meanconvex} can be used to prove that the two main estimates (with $\rho=0$) hold for the level set flow in Euclidean space. However, extending these arguments to level set flow in Riemannian manifolds is not straightforward.

The key difference between level set flow in Euclidean space and level set in general ambient manifolds, is that in the latter case the flow generally does not become extinct in finite time, but converges to a nonempty limit $K_\infty$ for $t\to\infty$. Consequently, the time of arrival function $u$ is only defined on the set $K_0\setminus K_\infty$. Thus, it is (a) not clear a priori how to approximate $u$ by smooth solutions, and (b) even if one succeeds in approximating $u$ by smooth solutions it is not obvious how to prove our main estimates using the approximators, since one would have to somehow bring in the exponential in time factor and would have to cut off all quantities under consideration for $t\to \infty$. 

To overcome the above difficulties, we consider a new double-approximation scheme. Namely, we consider functions $u_{\eps,\sigma}$ solving the 
Dirichlet problem
\begin{align}\label{double_reg}
\textrm{div}\left(\frac{Du_{\eps,\sigma}}{\sqrt{\eps^2+\abs{Du_{\eps,\sigma}}^2}}\right)+\frac{1}{\sqrt{\eps^2+\abs{Du_{\eps,\sigma}}^2}}&=\sigma u_{\eps,\sigma}  &  \textrm{in} \,\,\textrm{Int}(K_0),\nonumber\\
u_{\eps,\sigma}&= 0 & \textrm{on} \,\,\partial K_0.
\end{align}
The idea, inspired in part by the Schoen-Yau proof of the positive mass theorem \cite{SchoenYau}, is that for $\sigma>0$ the maximum principle gives the a-priori sup bound $u_{\eps,\sigma}\leq \frac{1}{\eps\sigma}$. Thus, as we will see in Section \ref{double_app_ex_sec}, for positive $\sigma$ the Dirichlet problem \eqref{double_reg} can be solved using a standard continuity argument. We then argue that for $\sigma\to 0$ we have convergence in an appropriate sense to functions $u_\eps$, which in turn for $\eps\to 0$ converge to the time of arrival function  $u:K_0\setminus K_\infty\to \mathbb{R}$, see Section \ref{pass_lim}. This solves the above difficulty (a).

More fundamentally, we use our double approximation to also solve the difficulty (b). Namely, in Section \ref{sec_estforH} and Section \ref{sec_estforAH} we prove two estimates for carefully chosen quantities at the level of the double approximators $M^{\eps,\sigma}=\textrm{graph}(u_{\eps,\sigma}/\eps)$. We choose our quantities in such a way, that on the one hand they satisfy the maximum principle and on the other hand taking the limits $\sigma\to 0$ and $\eps\to 0$ of the estimates for the double approximators yields the two main estimates for the actual level set flow. There is obviously quite some tension between these two desired properties, and we thus have to design our quantities for the double approximate estimates very carefully. For example, to estimate $\abs{A}/H$ we consider the quantity
\begin{equation}
\frac{\abs{A}+\Lambda\sigma u_{\eps,\sigma}}{(H+\sigma u_{\eps,\sigma})e^{\rho u_{\eps,\sigma}}},
\end{equation}
which turns out to indeed satisfy the maximum principle after taking in account also an improved Kato inequality at points where $\abs{A}/H$ is large, see Section \ref{sec_estforAH}. Finally, in Section \ref{pass_lim} we show that taking the limits $\sigma\to 0$ and $\eps\to 0$ of our double approximate estimates indeed yields Theorem \ref{estimate1} and Theorem \ref{estimate2}, and thus Theorem \ref{structurethm}.\\

\noindent\textbf{Acknowledgements.} We thank Brian White for bringing the problem of subsequent singularities in Riemannian manifolds to our attention. This work has been partially supported by the NSF grants DMS-1406394 and DMS-1406407. The second author wishes to thank Jeff Cheeger for his generous support during the work on this project.

\section{Existence of double approximators}\label{double_app_ex_sec}

The goal of this section is to prove the existence of double approximators.

\begin{theorem}\label{existence_thm}
If $K_0\subset N$ is a mean convex domain in a Riemannian manifold, then the Dirichlet problem \eqref{double_reg} has a unique smooth solution $u_{\eps,\sigma}$ for every $\eps,\sigma>0$.
\end{theorem}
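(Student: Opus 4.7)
My plan is to solve \eqref{double_reg} by the continuity method. Consider the family
\begin{equation}
F_\tau[u] := \textrm{div}\left(\frac{Du}{\sqrt{\eps^2+\abs{Du}^2}}\right)+\frac{\tau}{\sqrt{\eps^2+\abs{Du}^2}} - \sigma u = 0, \quad u|_{\partial K_0}=0, \quad \tau\in[0,1],
\end{equation}
and let $S\subseteq[0,1]$ be the set of parameters admitting a smooth solution. Since $u\equiv 0$ solves the equation for $\tau=0$, we have $0\in S$. I aim to show $S$ is both open and closed, and hence $1\in S$, which yields the desired $u_{\eps,\sigma}$. Uniqueness follows at once from the comparison principle: if $u_1,u_2$ both solve \eqref{double_reg}, their difference satisfies a linear elliptic equation with zeroth-order coefficient $-\sigma<0$, so $u_1\equiv u_2$.

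Openness follows from the implicit function theorem in $C^{2,\alpha}$: the linearization of $F_\tau$ at a smooth solution is a uniformly elliptic operator whose zeroth-order coefficient equals $-\sigma<0$, and this favorable sign, together with Schauder theory, makes it an isomorphism from $C^{2,\alpha}_0(\overline{K_0})$ onto $C^\alpha(\overline{K_0})$. Closedness reduces to $\tau$-independent a priori $C^{2,\alpha}$ bounds. For the $C^0$ bound, an interior positive maximum $x_0$ satisfies $Du(x_0)=0$ and $\Delta u(x_0)\leq 0$, so the equation gives $\sigma u(x_0)=(\Delta u(x_0)+\tau)/\eps\leq 1/\eps$, hence $u\leq 1/(\eps\sigma)$; a parallel argument at a negative minimum is self-contradictory, yielding the two-sided bound $0\leq u\leq 1/(\eps\sigma)$. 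This is precisely the Schoen--Yau-inspired sup bound the authors emphasize. For the boundary gradient bound, mean convexity of $\partial K_0$ allows one to construct upper barriers of the form $w(x)=\psi(d(x,\partial K_0))$ with a suitable concave $\psi$, in the spirit of Ilmanen's barrier construction for prescribed-mean-curvature type graph equations; combined with the lower barrier $u\geq 0$ and Hopf's lemma, this gives $|Du|\leq C$ on $\partial K_0$.

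For the interior gradient bound, I would apply the maximum principle to an auxiliary quantity of the form $\eta(u)\sqrt{\eps^2+|Du|^2}$, or equivalently work geometrically on the graph $\textrm{graph}(u/\eps)\subset K_0\times\mathbb{R}$, where the PDE expresses a translator-type equation with a tilt determined by $\sigma u$. The already-proved sup bound on $u$ then feeds into the cross terms and lets one absorb the ambient curvature contributions of $N$ into the good negative terms coming from the $-\sigma u$ zeroth order piece. Once $|Du|$ is bounded, the equation becomes uniformly elliptic with smooth structure; $C^{1,\alpha}$ regularity follows from De Giorgi--Nash--Moser, and a standard Schauder bootstrap gives $C^\infty$ smoothness up to $\partial K_0$. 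I expect the interior gradient estimate to be the main obstacle, since the ellipticity degenerates as $|Du|\to\infty$ and the curvature of $N$ produces bad commutator terms; the key feature that makes the argument close is precisely the interplay between the zeroth-order term $\sigma u$ and the resulting $C^0$ bound $u\leq 1/(\eps\sigma)$, which is exactly the point of introducing the double approximation in the first place.
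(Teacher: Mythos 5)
Your proposal follows essentially the same route as the paper: continuity method in $\kappa$ (your $\tau$), the maximum-principle sup bound $0\leq u\leq 1/(\eps\sigma)$, a mean-convexity barrier for the boundary gradient, a maximum-principle argument on a weighted gradient quantity on the graph $\graph(u/\eps)$ for the interior gradient estimate (the paper uses precisely $Vw$ with $V=(\eps^2+|Du|^2)^{-1/2}$ and $w=e^{mz}$, which is the reciprocal of your proposed $\eta(u)\sqrt{\eps^2+|Du|^2}$ with $\eta=e^{-mu/\eps}$), then De Giorgi--Nash--Moser and Schauder for closedness, and the sign $-\sigma<0$ of the zeroth-order coefficient for both uniqueness and openness. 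The only small inaccuracy is in your description of where the ``good'' terms come from in the interior gradient estimate: the curvature $\Rc(\nu,\nu)$ is absorbed by the $m^2|\tau^\top|^2$ term generated by the exponential weight $w$ (plus a helpful positive $\kappa/\eps$ contribution), while the $\sigma u$ zeroth-order piece actually produces a bad cross term that is controlled using the sup bound---not a good term.
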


To prove Theorem \ref{existence_thm} we will use the continuity method (see e.g. \cite{SchoenYau,Schulze} for the continuity method for related equations). Namely, we consider the Dirichlet problem
\begin{align}\label{trip_app_eq}
\textrm{div}\left(\frac{Du_{\eps,\sigma,\kappa}}{\sqrt{\eps^2+\abs{Du_{\eps,\sigma,\kappa}}^2}}\right)+\frac{\kappa}{\sqrt{\eps^2+\abs{Du_{\eps,\sigma}}^2}}&=\sigma u_{\eps,\sigma,\kappa}  &  \textrm{in} \,\,\textrm{Int}(K_0),\nonumber\\
u_{\eps,\sigma,\kappa}&= 0 & \textrm{on} \,\,\partial K_0.
\end{align}
For $\kappa=0$ the problem has the obvious solution $u_{\eps,\sigma,0}=0$. We will now derive the needed a priori estimates for $\kappa\in[0,1]$. Note first that we have the sup-bound
\begin{equation}\label{easy_sup_bound}
0 \leq u_{\de,\s,\ka}\leq \frac{\ka}{\s\de},
\end{equation}
which follows directly from the maximum principle. To proceed further, we consider the graph $M^{\de,\s,\ka}=\graph(u_{\de,\s,\ka}/\de)\subset N\times \mathbb{R}_+$. We write $\tau=\tfrac{\partial}{\partial z}$ for the unit vector in $\mathbb{R}_+$ direction, and $\nu$ for the upward pointing unit normal of $M$ (here and in the following we drop the dependence on $(\eps,\sigma,\kappa)$ in the notation when there is no risk of confusion). Written more geometrically, equation  \eqref{trip_app_eq} takes the form
\begin{equation}\label{equation_geom}
H+\sigma u=\kappa V,
\end{equation}
where $H$ is the mean curvature of $M\subset N\times \mathbb{R}_+$, and $V=\tfrac{1}{\eps}\langle \tau,\nu\rangle$.
We write $\langle\cdot,\cdot\rangle$ for the product metric on $N\times \mathbb{R}_+$, and $\nabla$ for the covariant derivative on $M$. We will frequently use the following general lemma about graphs.

\begin{lemma}\label{lap_calc}
On any graph $M\subset N\times\mathbb{R}_+$ we have
\begin{equation}\label{prod_lap}
\Delta \langle \tau,\nu \rangle=\langle \tau, \nabla H \rangle-\left(\textrm{Rc}(\nu,\nu)+|A|^2\right)\langle\tau,\nu\rangle.
\end{equation}
Moreover, the weight function $w=e^{mz}$, where $m$ is a constant, satisfies 
\begin{equation}\label{dump_der}
\nabla w= mw\tau^\top,\qquad\qquad \Delta w=\left(m^2\abs{\tau^\top}^2-m\langle \tau, \nu \rangle H\right)w,
\end{equation}
where $\tau^\top=\tau-\langle \tau,\nu\rangle \nu$ denotes the tangential part of $\tau$.  
\end{lemma}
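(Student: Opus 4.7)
Both formulas rest on two structural facts about the product ambient metric on $N\times\mathbb{R}_+$: the vertical vector field $\tau=\partial_z$ is parallel on $N\times\mathbb{R}_+$, and the height function $z$ is ambient-linear with $\bar\nabla z=\tau$ and $\bar\nabla^2 z\equiv 0$. In particular $\mathrm{Rc}(\tau,\cdot)$ vanishes on $N\times\mathbb{R}_+$, because $\tau$ points into the flat $\mathbb{R}_+$ factor.

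For \eqref{prod_lap}, I would pick a local orthonormal frame $\{e_i\}$ on $M$ and compute in two stages. Since $\tau$ is parallel, the tangential derivative $e_i\langle\tau,\nu\rangle=\langle\tau,\bar\nabla_{e_i}\nu\rangle$ reduces by Weingarten to a contraction of $h$ with $\tau^\top$. Differentiating once more, the Codazzi identity $\nabla_i h_{ij}=\nabla_j H \pm \mathrm{Rc}(\nu,e_j)$ produces a term $\langle\tau,\nabla H\rangle$ together with a Ricci contraction $\mathrm{Rc}(\tau^\top,\nu)$, while differentiating $\tau^\top$ intrinsically along $M$ generates $-|A|^2\langle\tau,\nu\rangle$ from the contraction $h_{ij}h_{ij}\langle\tau,\nu\rangle$. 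The Ricci contraction simplifies via $\mathrm{Rc}(\tau^\top,\nu)=-\langle\tau,\nu\rangle\,\mathrm{Rc}(\nu,\nu)$, since $\mathrm{Rc}(\tau,\nu)=0$ in the product. Collecting the three contributions gives \eqref{prod_lap}.

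For \eqref{dump_der}, since the intrinsic gradient on $M$ is the tangential projection of the ambient gradient, one has $\nabla z=\tau^\top$, and the chain rule immediately gives $\nabla w=mw\tau^\top$. Differentiating once more, $\Delta w=m\,\mathrm{div}_M(w\tau^\top)=m^2w|\tau^\top|^2+mw\Delta z$. The standard trace formula $\Delta_M u=\bar\Delta u-\bar\nabla^2 u(\nu,\nu)-H\,\nu(u)$ for the induced Laplacian of an ambient function, specialized to the linear function $u=z$ (whose ambient Hessian and Laplacian both vanish), collapses to $\Delta z=-H\langle\tau,\nu\rangle$. Substituting yields the claimed expression for $\Delta w$.

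The computation is essentially routine; the main thing requiring care is keeping sign conventions (for $h_{ij}$, Weingarten, Codazzi, and the Ricci contraction) consistent throughout so that the cancellations producing the Ricci-free pieces of the cross terms come out correctly, and using crucially that $\mathrm{Rc}(\tau,\cdot)\equiv 0$ to remove the Ricci contraction with a tangential argument.
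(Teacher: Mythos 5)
Your proposal is correct and follows essentially the same route as the paper: orthonormal frame, Weingarten and Codazzi for $\Delta\langle\tau,\nu\rangle$, and the product-structure identity $\mathrm{Rc}(\nu,\tau^\top)=-\langle\tau,\nu\rangle\,\mathrm{Rc}(\nu,\nu)$ to absorb the Ricci contraction. The only cosmetic difference is that for $\Delta w$ you invoke the general trace formula $\Delta_M z=\bar\Delta z-\bar\nabla^2 z(\nu,\nu)-H\,\nu(z)$, whereas the paper just expands $\mathrm{div}(\nabla w)$ directly in the frame; the two computations are identical in substance.
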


\begin{proof}
Let $e_i$ be an orthonormal frame with $\nabla_{e_i} e_j=0$ at the point in consideration, and let $h_{ij}=A(e_i,e_j)$ be the components of the second fundamental form. Note that
\begin{equation}
\nabla \langle \tau,\nu\rangle=\nabla_{e_i}\langle \tau,\nu\rangle\, e_i=h_{ij}\langle \tau, e_j\rangle\, e_i,
\end{equation}
where here and in the following repeated indices are summed over. Using this, we compute
\begin{equation}
\Delta \langle \tau,\nu \rangle =\textrm{div}(\nabla  \langle \tau,\nu\rangle)= \nabla_{e_i}h_{ij}\langle \tau, e_j \rangle-h_{ij}h_{ij}\langle \tau, \nu \rangle.
\end{equation}
The Codazzi identity gives $\nabla_{e_i}h_{ij}=\nabla_{e_j}H+\textrm{Rc}(\nu,e_j)$. Since there is no curvature in $\tau$-direction we have $\textrm{Rc}(\nu,\tau^\top)=- \langle \tau,\nu\rangle\, \textrm{Rc}(\nu,\nu)$, and equation \eqref{prod_lap} follows.

Arguing similarly, we compute $\nabla w= \nabla_{e_i} w\, e_i=mw\langle \tau,e_i \rangle\, e_i$, and
\begin{equation}
\Delta w=\textrm{div}(\nabla w)=m^2w\langle \tau,e_i \rangle\langle \tau,e_i \rangle-mwh_{ii}\langle \tau,\nu \rangle.
\end{equation}
This proves the lemma.
\end{proof}

\begin{corollary}\label{cor_evolution_vw}
On $M^{\eps,\sigma,\kappa}=\graph(u_{\eps,\sigma,\kappa}/\eps)\subset N\times\mathbb{R}_+$ we have
\begin{multline}
\Delta (V w)=-\left(\textrm{Rc}(\nu,\nu)+|A|^2+m^2\abs{\tau^\top}^2+m (\kappa V-\sigma u)\eps V\right)Vw\\
+2m\langle \tau,\nabla(V w)\rangle+\tfrac{1}{\eps}\langle \tau, \kappa\nabla V-\sigma\nabla u \rangle w.
\end{multline}
\end{corollary}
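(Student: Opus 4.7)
The plan is a direct computation starting from the two identities of Lemma \ref{lap_calc} together with the geometric form \eqref{equation_geom} of the equation, and then massaging the result so that the cross term appears as a divergence-free drift $2m\langle\tau,\nabla(Vw)\rangle$ suitable for the maximum principle later.

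First I would compute $\Delta V$. Since $V=\tfrac{1}{\eps}\langle\tau,\nu\rangle$, the first part of Lemma \ref{lap_calc} gives
\[
\Delta V=\tfrac{1}{\eps}\langle\tau,\nabla H\rangle-\bigl(\mathrm{Rc}(\nu,\nu)+|A|^2\bigr)V.
\]
Equation \eqref{equation_geom}, $H+\sigma u=\kappa V$, lets me substitute $\nabla H=\kappa\nabla V-\sigma\nabla u$, producing the term $\tfrac{1}{\eps}\langle\tau,\kappa\nabla V-\sigma\nabla u\rangle$ that we want in the final expression. Next, the second part of Lemma \ref{lap_calc}, together with $\langle\tau,\nu\rangle=\eps V$ and (again) $H=\kappa V-\sigma u$, yields
\[
\Delta w=\bigl(m^2|\tau^\top|^2-m(\kappa V-\sigma u)\eps V\bigr)w.
\]

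Now I apply the product rule $\Delta(Vw)=w\Delta V+V\Delta w+2\langle\nabla V,\nabla w\rangle$. Here $\nabla w=mw\tau^\top$, and since $\nabla V$ is tangent to $M$ one has $\langle\nabla V,\nu\rangle=0$, so $\langle\nabla V,\tau^\top\rangle=\langle\nabla V,\tau\rangle$. This gives $2\langle\nabla V,\nabla w\rangle=2mw\langle\tau,\nabla V\rangle$. The key rewriting is
\[
2mw\langle\tau,\nabla V\rangle=2m\langle\tau,\nabla(Vw)\rangle-2mV\langle\tau,\nabla w\rangle =2m\langle\tau,\nabla(Vw)\rangle-2m^2V|\tau^\top|^2w,
\]
using $\langle\tau,\nabla w\rangle=mw|\tau^\top|^2$. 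The subtracted term then combines with the $+m^2|\tau^\top|^2$ piece inside $V\Delta w$, producing a net $-m^2|\tau^\top|^2 Vw$, which is exactly the sign appearing in the claimed formula.

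Collecting everything, the $\mathrm{Rc}(\nu,\nu)+|A|^2$ piece and the $m(\kappa V-\sigma u)\eps V$ piece assemble into the bracketed coefficient of $-Vw$, while the drift term $2m\langle\tau,\nabla(Vw)\rangle$ and the inhomogeneity $\tfrac{1}{\eps}\langle\tau,\kappa\nabla V-\sigma\nabla u\rangle w$ remain as stated. Since every manipulation is an algebraic rearrangement once the two identities of Lemma \ref{lap_calc} and the relation \eqref{equation_geom} are in hand, there is no genuine obstacle; the only point requiring care is the sign-flip produced when trading the symmetric cross term $2\langle\nabla V,\nabla w\rangle$ for the drift $2m\langle\tau,\nabla(Vw)\rangle$, and keeping track of the factor $\eps$ coming from $\langle\tau,\nu\rangle=\eps V$.
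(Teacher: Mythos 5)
Your computation is correct and follows essentially the same route as the paper: apply Lemma \ref{lap_calc} to $\Delta V$ and $\Delta w$, substitute $\nabla H=\kappa\nabla V-\sigma\nabla u$ from \eqref{equation_geom}, and trade the cross term $2\langle\nabla V,\nabla w\rangle$ for the drift $2m\langle\tau,\nabla(Vw)\rangle$ minus $2m^2|\tau^\top|^2Vw$, which produces the $-m^2|\tau^\top|^2$ sign. The paper phrases the cross-term rewriting as $\tfrac{2}{w}\langle\nabla w,\nabla(Vw)\rangle-\tfrac{2V}{w}|\nabla w|^2$, which is the identical algebraic identity.
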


\begin{proof}
Recall that $V=\tfrac{1}{\eps}\langle \tau,\nu\rangle$ and that $H=\kappa V-\sigma u$. Using this and the formula
\begin{equation}
\Delta (Vw)=V\Delta w+w\Delta V+\frac{2}{w}\langle \nabla w,\nabla (Vw) \rangle -\frac{2V}{w}\abs{\nabla w}^2,
\end{equation}
the claim follows from a short computation.
\end{proof}

\begin{proposition}\label{V_bd_1}
Choosing $m> 2\max_{K_0}\abs{\Rc}^{1/2}$ the function $V:M^{\eps,\sigma,\kappa}\to \mathbb{R}$ satisfies
\begin{equation}
V(x,z) \geq \min\left(\frac{1}{2\eps},\min_{\partial K_0}V\right)\cdot e^{-mz}.
\end{equation}
\end{proposition}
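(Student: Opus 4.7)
The plan is to apply the minimum principle to the auxiliary function $f := Vw = V e^{mz}$ on the compact graph $M = M^{\eps,\sigma,\kappa}$. Since $M$ is a smooth graph over $K_0$, one has $V = 1/\sqrt{\eps^2 + \abs{Du}^2} > 0$ everywhere, so $f$ is smooth and positive. On the boundary $\partial M = \partial K_0 \times \{0\}$ the weight $w$ equals $1$, so $f|_{\partial M} = V|_{\partial K_0}$; setting $\mu := \min(1/(2\eps),\, \min_{\partial K_0} V)$, the desired bound $V(x,z) \geq \mu\, e^{-mz}$ is equivalent to $f \geq \mu$ on $M$, and the boundary values already satisfy this. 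It therefore suffices to rule out an interior minimum $p_0$ of $f$ with $f(p_0) < \mu$.

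At such an interior critical point the conditions $\nabla f(p_0) = 0$ and $\Delta f(p_0) \geq 0$ are available. Using $\nabla w = mw\tau^\top$ from Lemma \ref{lap_calc}, the vanishing of $\nabla f$ translates into $\nabla V = -mV\tau^\top$; moreover, since $u = \eps z$ along $M$, one has $\nabla u = \eps \tau^\top$. Plugging these identities into the formula of Corollary \ref{cor_evolution_vw}, substituting $H = \kappa V - \sigma u$ and $\abs{\tau^\top}^2 = 1 - \eps^2 V^2$, and observing that the cubic-in-$V$ terms cancel, one obtains at $p_0$ the inequality
\begin{equation*}
(\Rc(\nu,\nu) + \abs{A}^2 + m^2\abs{\tau^\top}^2)\,V + \tfrac{m\kappa V}{\eps} - m\eps\sigma u V^2 + \sigma \abs{\tau^\top}^2 \leq 0.
\end{equation*}

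The remaining task is to show this left-hand side is strictly positive under the standing assumption $f(p_0) < \mu$, producing the desired contradiction. Since $w \geq 1$ we have $V \leq f(p_0) < 1/(2\eps)$, hence $\abs{\tau^\top}^2 = 1 - \eps^2 V^2 > 3/4$; combined with $m^2 > 4\max_{K_0}\abs{\Rc}$ this gives $\Rc(\nu,\nu) + m^2\abs{\tau^\top}^2 > m^2/2 > 0$, so the first term is strictly positive. The a priori sup bound \eqref{easy_sup_bound} yields $\sigma u \leq \kappa/\eps$, so $m\eps\sigma u V^2 \leq m\kappa V^2 < \tfrac{m\kappa V}{2\eps}$, hence $\tfrac{m\kappa V}{\eps} - m\eps\sigma u V^2 \geq \tfrac{m\kappa V}{2\eps} \geq 0$; finally $\sigma\abs{\tau^\top}^2 > 0$. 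The degenerate case $\kappa = 0$ is immediate, since the equation and Dirichlet condition then force $u \equiv 0$ and $V \equiv 1/\eps$.

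The main conceptual step is the choice of the weight $w = e^{mz}$ with $m^2 > 4\max\abs{\Rc}$: the $m^2\abs{\tau^\top}^2 V$ term it produces in the evolution of $f$ precisely dominates the potentially harmful curvature term $\Rc(\nu,\nu)V$ in the regime $\eps V < 1/2$, which is exactly the regime in which the desired lower bound could fail. Without such a weight, negative ambient Ricci curvature would destroy the maximum principle.
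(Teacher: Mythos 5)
Your proof is correct and follows essentially the same approach as the paper: apply the minimum principle to $Vw$ with $w=e^{mz}$, invoke Corollary \ref{cor_evolution_vw} at an interior critical point together with $\nabla V=-mV\tau^\top$ and $\nabla u=\eps\tau^\top$, and use $\eps V<\tfrac12$, the sup-bound $\sigma u\leq\kappa/\eps$, and $m^2>4\max_{K_0}\abs{\Rc}$ to reach a contradiction. The only cosmetic differences are that the paper divides through by $V$ and drops the manifestly nonnegative terms before estimating, whereas you keep them and observe that the cubic-in-$V$ terms cancel; your closing remark about $\kappa=0$ is harmless but unnecessary, since the term $m^2\abs{\tau^\top}^2 V$ already yields the contradiction in that case.
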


\begin{proof}
If $Vw$ attains its minimum on $\partial M=\partial K_0$ we are done. Suppose now $Vw$ attains its minimum at an interior point $(x_0,z_0)\in M\setminus \partial M$. If $V(x_0,z_0)\geq \tfrac{1}{2\eps}$ there is nothing to prove. Suppose now $V(x_0,z_0)< \tfrac{1}{2\eps}$. Since $M$ is the graph of $u/\eps$, we have $\langle \nabla u,\tau\rangle\geq0$, and 
since $(x_0,z_0)$ is a critical point of $Vw$, we have $\nabla V=-mV\tau^\top$, and thus $\langle \tau,\nabla V\rangle=-mV\abs{\tau^\top}^2$.  Using this, and dropping some terms with the good sign, Corollary \ref{cor_evolution_vw} implies that
\begin{equation}\label{Vw_inequatmin}
\Rc(\nu,\nu)+m^2\abs{\tau^\top}^2-m \eps\sigma u V+\tfrac{m\kappa}{\eps}\abs{\tau^\top}^2\leq 0
\end{equation}
at $(x_0,z_0)$. On the other hand, recalling that $\eps V< \tfrac12$, we have $\abs{\tau^\top}^2=1-\eps^2 V^2\geq \tfrac34$. Together with the bound $\max_{K_0}\abs{Rc}\leq \tfrac{1}{4}m^2$ and the estimate \eqref{easy_sup_bound} we thus obtain
\begin{equation}
\Rc(\nu,\nu)+m^2\abs{\tau^\top}^2-m \eps\sigma u V+\tfrac{m\kappa}{\eps}\abs{\tau^\top}^2\geq  \tfrac{1}{2}m^2+\tfrac{1}{4\eps}m\kappa>0;
\end{equation}
a contradiction. This proves the proposition.
\end{proof}

\begin{remark}\label{remark_upperlower}
Recalling that $V=(\eps^2+\abs{Du}^2)^{-1/2}$, we see that the lower bound for $V$ from Proposition \ref{V_bd_1} is equivalent to an upper bound for $\abs{D u}$.
\end{remark}

\begin{lemma}[{c.f. \cite[Thm. 7.4]{evans-spruck}}]\label{boundary_grad_est1}
There exists a constant $C=C(\de,\s,K_0)<\infty$, such that
\begin{equation}
\sup_{\partial K_0}|D u_{\de,\s,\ka}| \leq C.
\end{equation}
\end{lemma}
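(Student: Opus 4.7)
The plan is to construct a linear upper barrier in a tubular neighborhood of $\partial K_0$, exploiting the mean convexity of $K_0$ so that the Riemannian distance function serves as a supersolution of \eqref{trip_app_eq}. Since $u_{\de,\s,\ka}\geq 0$ vanishes on $\partial K_0$, its tangential derivatives along $\partial K_0$ are automatically zero and the interior normal derivative is non-negative; consequently only an upper bound on the interior normal derivative is needed.

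Let $d(x)$ denote the Riemannian distance from $x\in K_0$ to $\partial K_0$. By smoothness of $\partial K_0$, the function $d$ is smooth on some tubular neighborhood $U_{r_0}=\{x\in K_0: d(x)<r_0\}$, and $\Lap d$ at a point of the parallel hypersurface $\{d=s\}$ equals the negative of its mean curvature. Since $\partial K_0$ is mean convex (hence of strictly positive mean curvature, by smoothness and compactness), shrinking $r_0=r_0(K_0)>0$ if necessary yields a uniform bound $\Lap d\leq -c_0$ in $U_{r_0}$ for some $c_0=c_0(K_0)>0$.

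Set $\bar u(x):=A\,d(x)$ with $A:=\max\{1/c_0,\,1/(\s\de r_0)\}$. Using $\abs{Dd}=1$, a direct computation gives
\begin{equation*}
\textrm{div}\!\left(\tfrac{D\bar u}{\sqrt{\de^2+\abs{D\bar u}^2}}\right)+\tfrac{\ka}{\sqrt{\de^2+\abs{D\bar u}^2}}-\s\bar u \;=\; \tfrac{A\Lap d+\ka}{\sqrt{\de^2+A^2}}-\s A d \;\leq\; 0 \quad\textrm{in }U_{r_0},
\end{equation*}
since $A\Lap d+\ka\leq -Ac_0+1\leq 0$ and $d\geq 0$. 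Moreover $\bar u\geq u_{\de,\s,\ka}$ on $\partial U_{r_0}$: on $\partial K_0$ both functions vanish, while on $\{d=r_0\}$ we have $\bar u=Ar_0\geq 1/(\s\de)\geq u_{\de,\s,\ka}$ by the a priori bound \eqref{easy_sup_bound}.

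Since the elliptic operator obtained by moving everything in \eqref{trip_app_eq} to one side is strictly decreasing in its zeroth-order argument (because of the $-\s u$ term), the standard comparison principle applies: at a hypothetical interior positive maximum of $u_{\de,\s,\ka}-\bar u$ one would have coincident gradients and $D^2 u_{\de,\s,\ka}\leq D^2\bar u$, yielding a contradiction with the strict monotonicity in $u$. Hence $u_{\de,\s,\ka}\leq\bar u=Ad$ throughout $U_{r_0}$. Together with the vanishing of $u_{\de,\s,\ka}$ and of its tangential derivatives on $\partial K_0$, this gives $\abs{Du_{\de,\s,\ka}(x_0)}\leq A=C(\de,\s,K_0)$ for every $x_0\in\partial K_0$, uniformly in $\ka\in[0,1]$. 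The only delicate point is the comparison principle for the quasilinear operator at hand, but the strict monotonicity in $u$ reduces it to the standard argument sketched above.
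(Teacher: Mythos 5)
Your proof is correct and follows essentially the same route as the paper: both construct the linear barrier $C\,d(\cdot,\partial K_0)$ on a tubular neighborhood, use mean convexity (via the Riccati equation) to get $\Delta d\leq -c_0$ there, verify the supersolution property with $C$ large enough to also dominate the sup bound \eqref{easy_sup_bound} on the inner boundary of the tube, and conclude by comparison. Your explicit justification of the comparison principle via the strict monotonicity coming from the $-\s u$ term is a nice touch that the paper leaves implicit.
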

\begin{proof}
Let $r$ be the distance function to $\partial K_0$, and let $\delta>0$, to be chosen later,  be such that $r$ is smooth on
$T_{\delta}=\{x\in K_0\, |\, r(x)<\delta\}$.
By estimate \eqref{easy_sup_bound}, for any $C\geq\frac{1}{\s\de\delta}$, the quantity $v=C r$ satisfies $v\geq u_{\de,\s,\ka}$ on $\partial T_\delta$. We will now show that, for $C$ large enough, $v$ is a supersolution of equation \eqref{trip_app_eq}. To this end we compute
\begin{equation}
\mathrm{div}\left(\frac{D v}{\sqrt{\de^2+|D v|^2}}\right)+\ka\frac{1}{\sqrt{\de^2+|D v|^2}}-\s v \leq C \frac{\Delta r}{\sqrt{\de^2+C^2}}+\frac{1}{\sqrt{\de^2+C^2}}.
\end{equation} 
Note that $\Delta r=-H_{S_r}$ where $S_r=\{x\in \Omega \, |\, d(x,\partial \Omega)=r\}$. Since $H_0:=\min_{\partial K_0}H>0$, by the smoothness of $K_0$ and the Riccati equation, there exists a $\delta=\delta(K_0)>0$ such that $r$ is smooth on $T_\delta$ and $\Delta r \leq  -\tfrac{1}{2}H_0$ there. Thus, for $C = \max\{\tfrac{1}{\s\de\delta},\tfrac{2}{H_0}\}$, the function $v$ is a supersolution of \eqref{trip_app_eq}. Since $|D r|=1$, this implies that $\sup_{\partial K_0}|D u|\leq C$.
\end{proof}

We can now prove the main theorem of this section.

\begin{proof}[{Proof of Theorem \ref{existence_thm}}]
Note first that equation \eqref{double_reg}  is of the form 
\begin{equation}
a_{ij}(D u_{\de,\s})D_iD_ju_{\de,\s}+b(D u_{\de,\s})-\s u_{\de,\s}=0.
\end{equation}
If $u_{\de,\s}$ and $\hat{u}_{\de,\s}$ are two solutions of the Dirichlet problem, then at an interior minimum of $v=u_{\de,\s}-\hat{u}_{\de,\s}$ we have $D u_{\de,\s}=D \hat{u}_{\de,\s}$ and thus
\begin{equation}
a_{ij}(D u_{\de,\s})D_iD_j v-\s v=0,
\end{equation}
which implies $v \geq 0$. Changing the roles of $u_{\de,\s}$ and $\hat{u}_{\de,\s}$, this proves uniqueness.

To prove existence, fix $\de,\s>0$, and let 
\begin{equation}
I=\{\ka\in [0,1]\,|\, \textrm{equation \eqref{trip_app_eq} has a solution with the parameters } (\de,\s,\ka)\}.
\end{equation}
We want to show that $1\in I$. Since $0\in I$, it sufficies to show that $I$ is open and closed.

To show closeness, we first recall the sup-bound $u\leq \tfrac{1}{\eps\sigma}$ from \eqref{easy_sup_bound}, and observe  that Proposition \ref{V_bd_1}, Remark \ref{remark_upperlower} and Lemma \ref{boundary_grad_est1} give the estimate
\begin{equation}
\sup_{K_0}\abs{Du}\leq C,
\end{equation}
where $C$ is independent of $\kappa$. By DeGiorgi-Nash-Moser  and Schauder estimates we get $\ka$-independent higher derivative bounds up to the boundary for solutions of the $(\de,\s,\ka)$-problem if $\ka\in I$. If $\{\ka_m\}\subseteq I$ and $\ka_m \rightarrow \ka$,  it follows that a subsequence of $u_{\de,\s,\ka_m}$ converges to a solution $u_{\de,\s,\ka}$ of the $(\de,\s,\ka)$-problem, which implies that $\ka\in I$. \\
To show that $I$ is open, consider the operator $\mathcal{M}_\ka:C^{2,\alpha}_0(K_0)\rightarrow C^{\alpha}(K_0)$ given by
\begin{equation}
\mathcal{M}_\ka(u)=\mathrm{div}\left(\frac{D u}{\sqrt{\de^2+|D u|^2}}\right)+\frac{\ka}{\sqrt{\de^2+|D u|^2}}-\s u.
\end{equation}
Assuming $\ka\in I$,  its linearization at $u_{\de,\s,\ka}$ is given by 
\begin{equation}
\mathcal{L}_{\ka}(v)=\mathrm{div}\left(\frac{D v}{\sqrt{\de^2+|D u_{\de,\s,\ka}|^2}}-\frac{\langle D u_{\de,\s,\ka},D v \rangle D u_{\de,\s,\ka}}{\left(\de^2+|D u_{\de,\s,\ka}|^2\right)^{3/2}} \right)-\frac{\ka\langle D u_{\de,\s,\ka},D v \rangle}{\left(\de^2+|D u_{\de,\s,\ka}|^2\right)^{3/2}}-\s v.
\end{equation}
Note that at a positive maximum of $v$,
\begin{equation}
\mathcal{L}_{\ka}(v)\leq \frac{1}{\sqrt{\de^2+|D u_{\de,\s,\ka}|^2}}\left(\Delta v-\frac{\mathrm{Hess}\,v\,(D u_{\de,\s,\ka},D u_{\de,\s,\ka})}{\de^2+|D u_{\de,\s,\ka}|^2}\right)-\sigma v< 0, 
\end{equation}
and similarly at a negative minimum point, $\mathcal{L}_{\ka}(v)> 0$. Hence, $v=0$ is the unique solution to $\mathcal{L}_{\ka}(v)=0$ with zero boundary. Thus, by standard elliptic theory, the map $\mathcal{L}_{\ka}:C^{2,\alpha}_0(K_0)\rightarrow C^{\alpha}(K_0)$ is invertible, and by the inverse function theorem, the map $\mathcal{M}:[0,1]\times C^{2,\alpha}_0(K_0)\rightarrow [0,1]\times  C^{\alpha}(K_0)$ given by $\mathcal{M}(\ka,u)=(\ka,\mathcal{M}_\ka(u))$ is locally invertible. Taking also into account the higher derivative estimates we conclude that $I$ is open, and we are done. 
\end{proof}

\section{Double approximate estimate for $H$}\label{sec_estforH}

The goal of this section is to derive a lower bound for the mean curvature. As explained in the introduction, we will work at the level of the double approximators $M^{\eps,\s}=\graph(u_{\eps,\sigma}/\eps)$, where $u_{\eps,\sigma}$ is a solution of \eqref{double_reg} with $\eps,\sigma\in(0,1)$. The task is then to find a suitable quantity that on the one hand satisfies the maximum principle and on the other hand gives the desired mean curvature bound in the limit $\sigma,\eps\to 0$. It turns out that for the mean curvature estimate the quantity $H+\sigma u_{\eps,\sigma}$ does the job.

\begin{theorem}\label{V_bd_2}
There exist constants $c=c(K_0)>0$ and $\rho=\rho(K_0)<\infty$ such that  
\begin{equation}
H\left(x,\tfrac{1}{\eps}u_{\eps,\sigma}(x)\right)+\sigma u_{\eps,\sigma}(x) \geq c e^{-\rho u_{\de,\s}(x)}.
\end{equation}
for every $x\in K_0$, whenever $u_{\eps,\sigma}$ is a solution of \eqref{double_reg} with $\eps,\sigma\in(0,1)$.
\end{theorem}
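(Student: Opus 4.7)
The strategy is to apply the maximum principle on the graph $M = \graph(u_{\eps,\sigma}/\eps) \subset N\times\R_+$ to the auxiliary function
$$f := (H+\sigma u_{\eps,\sigma})e^{\rho u_{\eps,\sigma}} = Vw,$$
where $V=\tfrac{1}{\eps}\langle \tau,\nu\rangle$, $w=e^{mz}$ and $m:=\rho\eps$, using the identity $H+\sigma u = V$ from \eqref{equation_geom} (with $\kappa=1$) together with $u_{\eps,\sigma}=\eps z$ on $M$. The target estimate is equivalent to $f\geq c(K_0)>0$ on $M$. The constants $c,\rho$ will depend only on $K_0$ (through $R_0:=\max_{K_0}|\Rc|$ and the mean-curvature data of $\partial K_0$), and the minimum of $f$ is attained either at an interior point or on $\partial M = \partial K_0\times\{0\}$; the two cases are handled separately.

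At an interior minimum of $f$, the first-order condition $\nabla(Vw)=0$ forces $\nabla V=-Vm\tau^\top$, and combined with $\nabla u=\eps\tau^\top$ on $M$ and $\nabla H=\nabla V-\sigma\nabla u$, Corollary \ref{cor_evolution_vw} simplifies at the critical point to
$$\bigl(\Rc(\nu,\nu)+|A|^2+m^2|\tau^\top|^2+m\eps HV\bigr)V + (Vm/\eps+\sigma)|\tau^\top|^2 \leq 0.$$
Substituting $m=\rho\eps$, using the Kato-type bound $|A|^2\geq H^2/n$ and completing the square in $H$ to absorb the sign-indefinite cross term $\rho\eps^2 HV^2$, one reduces to $\rho|\tau^\top|^2 \leq R_0 + \tfrac{n\rho^2}{4}\eps^4 V^2$. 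The identity $\eps^2V^2 = 1-|\tau^\top|^2$ then eliminates the $V^2$ on the right and, after solving for $|\tau^\top|^2$, yields the pointwise bound
$$V \;\geq\; \frac{1}{\eps}\sqrt{\frac{\rho-R_0}{\rho\bigl(1+\tfrac{n\rho\eps^2}{4}\bigr)}}.$$
For $\rho = \rho(K_0) > R_0$ and $\eps\leq 1$ this gives $V\geq c_1(K_0):=\sqrt{(\rho-R_0)/[\rho(1+n\rho/4)]}>0$, hence $f\geq V\geq c_1$ at any interior minimum.

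On the boundary $\partial M$ one has $u_{\eps,\sigma}=0$, so $f=H=V=(\eps^2+|Du_{\eps,\sigma}|^2)^{-1/2}$, and the task reduces to an $\eps,\sigma$-independent upper bound $|Du_{\eps,\sigma}|\leq C(K_0)$ on $\partial K_0$. The linear barrier of Lemma \ref{boundary_grad_est1} is inadequate since its constant degenerates like $1/(\eps\sigma\delta)$. I would replace it by a nonlinear supersolution of logarithmic type, $v(r)=-\beta\log(1-r/\delta)$, on a fixed collar $T_\delta=\{r<\delta\}$ with $\delta=\delta(K_0)$ small enough that $\Delta r\leq -\tfrac{1}{2}H_0$ (as in Lemma \ref{boundary_grad_est1}) and $\beta=\beta(K_0)$ tuned so that a direct computation using the radial formula for $\mathrm{div}(Dv/\sqrt{\eps^2+|Dv|^2})$ verifies the supersolution inequality for \eqref{double_reg} in $T_\delta$. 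Since $v=0=u_{\eps,\sigma}$ on $\partial K_0$ and $v\to\infty$ as $r\to\delta$, the comparison $v\geq u_{\eps,\sigma}$ on $\partial T_\delta$ is automatic, so the comparison principle gives $u_{\eps,\sigma}\leq v$ in $T_\delta$ and hence $|Du_{\eps,\sigma}|\leq v'(0)=\beta/\delta$ along $\partial K_0$. This yields $f\geq (1+(\beta/\delta)^2)^{-1/2}=:c_2(K_0)>0$ on $\partial M$. Combining, $f\geq \min(c_1,c_2)=:c(K_0)$ on $M$, which is the theorem.

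The main obstacle is the boundary step. The interior analysis is essentially algebraic once Corollary \ref{cor_evolution_vw} is in hand; the delicate point is that the standard Lemma \ref{boundary_grad_est1}-type barrier does not survive the limit $\eps,\sigma\to 0$, so one has to design a nonlinear ansatz whose inner-edge blow-up at $\{r=\delta\}$ absorbs the trivial bound $u_{\eps,\sigma}\leq 1/(\eps\sigma)$ while keeping its slope at $\partial K_0$ controlled purely by $K_0$-data, and whose supersolution property can be verified uniformly for $\eps,\sigma\in(0,1)$.
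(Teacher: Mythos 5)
Your proof is correct and follows the same overall strategy as the paper (apply the minimum principle to $Vw$ on the graph, then reduce to a uniform boundary gradient estimate), but both halves are carried out with genuinely different local arguments, so it is worth comparing.

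For the interior step, the paper (Proposition \ref{V_better_bd}) proceeds by a case split: if $V\geq\tfrac{1}{2\eps}$ there is nothing to prove, and if $V<\tfrac{1}{2\eps}$ then $|\tau^\top|^2\geq\tfrac34$; the $|A|^2$ term is simply dropped, the cross term $-m\eps\sigma u V$ is bounded using $\eps\sigma u\leq 1$ and $V<\tfrac{1}{2\eps}$, and one gets a contradiction with $\rho>4\max|\Rc|$. You instead keep the $|A|^2$ term via the Kato-type inequality $|A|^2\geq H^2/n$, complete the square in $H$ to absorb the indefinite term $\rho\eps^2 HV^2$, and then use $\eps^2V^2=1-|\tau^\top|^2$ to close the algebra. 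I checked the computation: it yields $\eps^2V^2\geq(\rho-R_0)/[\rho(1+\tfrac{n\rho\eps^2}{4})]$ and hence, for $\eps\leq 1$ and any $\rho>R_0$, a lower bound $V\geq c_1(K_0)>0$ at the interior minimum, which suffices since $w\geq1$. Your version needs only $\rho>R_0$ rather than $\rho>4R_0$, and avoids the case split, at the cost of a slightly longer algebraic manipulation — a fair trade.

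For the boundary step, the paper's Lemma \ref{boundary_grad_est2} builds the barrier $v=\p(u)$ with $\p(t)=\tfrac{1}{T-t}-\tfrac1T$, where $u$ is the \emph{smooth} short-time arrival function; this exploits the exact cancellation coming from the level set equation \eqref{levelseteq} and the derivative bounds \eqref{smooth_der_bds}. Your barrier $v(r)=-\beta\log(1-r/\delta)$ is a purely radial ansatz on the collar $T_\delta$. It does verify the supersolution inequality uniformly in $\eps,\sigma\in(0,1)$: with $\psi(r)=v'/\sqrt{\eps^2+v'^2}$ one gets $\mathrm{div}(\psi Dr)=\tfrac{\eps^2 v''}{(\eps^2+v'^2)^{3/2}}+\psi\Delta r$, and multiplying the whole operator by $\sqrt{\eps^2+v'^2}$ and using $v''=v'^2/\beta$, $\tfrac{\eps^2 v'^2}{\eps^2+v'^2}\leq\eps^2\leq1$, and $\Delta r\leq-\tfrac12 H_0$ reduces the requirement to $\tfrac1\beta+1\leq\tfrac12 H_0\,v'(r)$, which is guaranteed at the worst case $r=0$ once $\beta\geq\max(1,4\delta/H_0)$. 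Since $v\to\infty$ at $r=\delta$ while $u_{\eps,\sigma}\leq\tfrac{1}{\eps\sigma}<\infty$, the comparison on $\partial T_\delta$ is automatic and one gets $|Du_{\eps,\sigma}|\leq\beta/\delta$ on $\partial K_0$. This is more elementary than the paper's barrier, since it uses only the distance function and the Riccati bound on $\Delta r$, and does not invoke short-time smooth existence of the level set flow and its derivative estimates; on the other hand it has no analogue of the nice cancellation the paper exploits, so the verification is entirely by brute force. Either way, the conclusion $\min(c_1,c_2)=:c(K_0)$ is achieved and the theorem follows.

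Two small remarks worth addressing if you write this up: (i) in the interior step you should note explicitly that the signs of the dropped terms $m^2|\tau^\top|^2 V$ and $(\sigma/V)|\tau^\top|^2$ are favorable, so the reduction to $\rho|\tau^\top|^2\leq R_0+\tfrac{n\rho^2}{4}\eps^4V^2$ is legitimate even when $H<0$; and (ii) at the boundary the comparison is rigorously done on a slightly smaller collar $T_{\delta'}$ (or by an exhaustion), since $v$ is only finite on $T_\delta$ but blows up at the inner edge — this is routine but should be said.
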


\begin{remark}\label{lim_H_rk}
Taking the limits $\sigma\to 0$ and $\eps\to 0$ the estimate from Theorem \ref{V_bd_2} yields the mean curvature lower bound from Theorem \ref{estimate1}, see Section \ref{pass_lim} for the proof.
\end{remark}

In view of the equation $V=H+\sigma u$, proving Theorem \ref{V_bd_2} amounts to improving the lower bound for $V$ from Section \ref{double_app_ex_sec} in two ways. Namely, we will argue that in the case $\kappa=1$ the factor $e^{-mz}$ in Proposition \ref{V_bd_1} can be replaced by the better factor $e^{-\rho\eps z}$, and we will replace Lemma \ref{boundary_grad_est1} by a boundary estimate which is uniform in $\eps$ and $\sigma$.

\begin{proposition}\label{V_better_bd}
Choosing $\rho>4\max_{K_0}\abs{\Rc}$ the function $V:M^{\eps,\sigma}\to \mathbb{R}$ satisfies
\begin{equation}
V(x,z) \geq \min\left(\frac{1}{2\eps},\min_{\partial K_0}V\right)\cdot e^{-\eps\rho z}.
\end{equation}
\end{proposition}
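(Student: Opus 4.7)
The plan is to follow the template of the proof of Proposition \ref{V_bd_1} but exploit the fact that we now have $\kappa=1$ (not merely $\kappa\in[0,1]$), which allows the geometric term $\tfrac{m\kappa}{\eps}|\tau^\top|^2$ to play a decisive role rather than just being dropped. Specifically, I take the weight $w=e^{\eps\rho z}$, so that in Corollary \ref{cor_evolution_vw} one has $m=\eps\rho$, and consider the interior behavior of $Vw$ on $M^{\eps,\sigma}$. If the minimum of $Vw$ is attained on $\partial M=\partial K_0$, or if $V\geq \tfrac{1}{2\eps}$ at the interior minimum, the conclusion follows immediately. So I reduce to an interior minimum $(x_0,z_0)$ with $V(x_0,z_0)<\tfrac{1}{2\eps}$.

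At such a minimum $\Delta(Vw)\geq 0$ and $\nabla(Vw)=0$, which as in the proof of Proposition \ref{V_bd_1} yields $\nabla V=-mV\tau^\top$ and thus $\langle\tau,\nabla V\rangle=-mV|\tau^\top|^2$. Plugging this into Corollary \ref{cor_evolution_vw}, using $\langle\tau,\nabla u\rangle\geq 0$ (since $M$ is the graph of $u/\eps$) to discard the $-\tfrac{\sigma}{\eps}\langle\tau,\nabla u\rangle w$ term with the favorable sign, and dropping $|A|^2\geq 0$ and $m\eps\kappa V^2\geq 0$, I arrive at exactly the same key inequality as before:
\begin{equation*}
\Rc(\nu,\nu)+m^2|\tau^\top|^2-m\eps\sigma u V+\tfrac{m\kappa}{\eps}|\tau^\top|^2\leq 0.
\end{equation*}

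Now comes the quantitative step that gives the improved exponent. From $\eps V<\tfrac12$ we get $|\tau^\top|^2=1-\eps^2V^2\geq \tfrac34$. From the sup-bound \eqref{easy_sup_bound} with $\kappa=1$ we have $\sigma u\leq \tfrac{1}{\eps}$, and combined with $V\leq \tfrac{1}{2\eps}$ this yields $m\eps\sigma u V\leq \tfrac{\rho}{2}$. Substituting $m=\eps\rho$ and $\kappa=1$, and using $\eps\leq 1$, the left-hand side is bounded below by
\begin{equation*}
-\max_{K_0}|\Rc|+\tfrac{3}{4}\eps^2\rho^2-\tfrac{\rho}{2}+\tfrac{3\rho}{4}\geq -\max_{K_0}|\Rc|+\tfrac{\rho}{4},
\end{equation*}
which is strictly positive by the choice $\rho>4\max_{K_0}|\Rc|$, a contradiction.

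The main conceptual obstacle, compared to Proposition \ref{V_bd_1}, is the scaling issue: there one needed $m\gtrsim|\Rc|^{1/2}$ independent of $\eps$, producing a decay factor $e^{-mz}=e^{-mt/\eps}$ which would be useless after rescaling to the time variable $t=\eps z$. The trick here is that with $\kappa=1$ the previously-discarded term $\tfrac{m\kappa}{\eps}|\tau^\top|^2$ is of order $\rho$ (independent of $\eps$), so one can afford to take $m=\eps\rho$ very small; the price is that the term $-m\eps\sigma uV$ must be controlled \emph{using the sup bound on $u$}, and this is what forces the factor $4$ in the threshold $\rho>4\max|\Rc|$.
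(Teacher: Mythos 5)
Your proof is correct and follows essentially the same route as the paper's: the paper simply plugs $\kappa=1$ and $m=\eps\rho$ into the already-established inequality \eqref{Vw_inequatmin} and then applies $\eps\sigma u\leq 1$, $V<\tfrac{1}{2\eps}$, $|\tau^\top|^2\geq \tfrac34$ to reach the contradiction $\Rc(\nu,\nu)+\tfrac14\rho<0$, which is exactly your argument (your parenthetical ``using $\eps\leq 1$'' is superfluous since you are just dropping the nonnegative term $\tfrac34\eps^2\rho^2$, but this is harmless).
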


\begin{proof}
Consider the function $Vw$ where $w=e^{\rho\eps z}$. As in the proof of Proposition \ref{V_bd_1} we can assume that $Vw$ attains its minimum at an interior point $(x_0,z_0)\in M\setminus\partial M$ and that $V(x_0,z_0)<\tfrac{1}{2\eps}$ (otherwise there is nothing to prove).
The estimate \eqref{Vw_inequatmin} with $\kappa=1$ and $m=\rho\eps$ reads
\begin{equation}
\Rc(\nu,\nu)- \eps^2\rho \sigma u V+(\rho+\eps^2\rho^2)\abs{\tau^\top}^2\leq 0.
\end{equation}
Combining this with the inequalities $\eps\sigma u\leq 1$, $V<\tfrac{1}{2\eps}$, and $\abs{\tau^\top}\geq \tfrac34$ yields
\begin{equation}
\Rc(\nu,\nu)+\tfrac14 \rho< 0,
\end{equation}
which contradicts our choice of $\rho$. This proves the proposition.
\end{proof}

\begin{lemma}[Uniform boundary estimate]\label{boundary_grad_est2} 
There exists a constant $C=C(K_0)<\infty $ such that
\begin{equation}
\sup_{\partial K_0}|D u_{\de,\s}| \leq C.
\end{equation}
\end{lemma}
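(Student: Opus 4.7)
The objective is a bound on $|Du_{\de,\s}|$ at $\partial K_0$ that does not deteriorate as $\de,\s\to 0$. The linear barrier $v=Cr$ from Lemma \ref{boundary_grad_est1} cannot achieve this, because it required $C\geq(\s\de\delta)^{-1}$ in order to dominate the a priori sup bound $u_{\de,\s}\leq 1/(\s\de)$ on the inner sphere $\{r=\delta\}$ of the tube. The plan is therefore to replace it with a barrier that blows up at $\{r=\delta\}$, so that no sup bound on $u_{\de,\s}$ needs to be transported to the inner boundary.

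As in Lemma \ref{boundary_grad_est1}, fix $\delta=\delta(K_0)>0$ such that the distance function $r$ to $\partial K_0$ is smooth on $T_\delta=\{x\in K_0 : r(x)<\delta\}$ and $\Lap r\leq -H_0/2$ holds there, with $H_0=\min_{\partial K_0}H>0$. Try the radial barrier $v(x)=F(r(x))$ with $F(r)=-A\log(1-r/\delta)$ and $A=A(K_0)>0$ to be chosen. A direct computation using $Dv=F'Dr$ and $|Dr|=1$ gives
\begin{equation}
\mathrm{div}\!\left(\frac{Dv}{\sqrt{\de^2+|Dv|^2}}\right)+\frac{1}{\sqrt{\de^2+|Dv|^2}}-\s v
=\frac{\de^2 F''}{(\de^2+(F')^2)^{3/2}}+\frac{F'\Lap r+1}{\sqrt{\de^2+(F')^2}}-\s v.
\end{equation}
The key feature of the ansatz is the scale-invariant identity $F''=(F')^2/A$, which forces the first term on the right to be at most $\de^2/(AF')\leq 1/A$ whenever $F'\geq 1$ and $\de\leq 1$. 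Using $\Lap r\leq -H_0/2$ together with the choice of $A$ so large that $F'(r)=A/(\delta-r)\geq 4/H_0$ throughout $T_\delta$, the second term is bounded above by $-H_0/(4\sqrt{2})$. Since $\s v\geq 0$, for $A=A(K_0)$ large enough the right-hand side is strictly negative, so $v$ is a supersolution of \eqref{double_reg}, uniformly in $\de,\s\in(0,1)$.

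Now apply comparison. Since $u_{\de,\s}\leq 1/(\de\s)<\infty$ while $v\to\infty$ as $r\uparrow\delta$, for every $\eta>0$ sufficiently small one has $v\geq u_{\de,\s}$ on $\{r=\delta-\eta\}$, and $v=u_{\de,\s}=0$ on $\partial K_0$. The comparison principle for the operator $a_{ij}(Du)D_iD_ju+b(Du)-\s u$ is valid thanks to the favorable zero-order term $-\s u$, exactly as in the uniqueness argument in the proof of Theorem \ref{existence_thm}; it yields $u_{\de,\s}\leq v$ on $\{r\leq \delta-\eta\}$. Sending $\eta\to 0$ and differentiating at $\partial K_0$ then gives $|Du_{\de,\s}|\leq F'(0)=A/\delta=:C(K_0)$, as required.

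The main obstacle is to design a barrier whose supersolution character survives the joint degeneration $\de,\s\to 0$. Taking $v$ infinite on the inner boundary of the tube removes the need for any uniform sup bound on $u_{\de,\s}$; the self-similar choice $F=-A\log(1-r/\delta)$ is what keeps the degenerate \emph{capillary} term $\de^2 F''/(\de^2+(F')^2)^{3/2}$ of size $O(1/A)$ uniformly in $\de$, so that mean convexity of $\partial K_0$ can absorb all remaining terms at a scale determined only by $K_0$.
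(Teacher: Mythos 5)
Your proof is correct, and it takes a genuinely different route from the paper while sharing the same basic philosophy: build a barrier that blows up on an inner shell, so that the $\eps,\sigma$-dependent sup bound $u_{\de,\s}\leq 1/(\de\s)$ never needs to be transported to the far side of the tube. The paper constructs its barrier by \emph{bending the smooth level set flow} to infinity: it uses the restricted time of arrival function $u$ on $K_0\setminus K_{T_0}$ (smooth for small $T_0$ with $C^{-1}\leq |Du|\leq C$, $|\mathrm{Hess}\,u|\leq C$) and sets $v=\p(u)$ with $\p(t)=\tfrac{1}{T-t}-\tfrac{1}{T}$, exploiting that $u$ solves the level set equation \eqref{levelseteq} exactly so that the $-\p'/\sqrt{\de^2+|Dv|^2}$ term dominates. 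You instead bend the \emph{distance function} to $\partial K_0$ with $F(r)=-A\log(1-r/\delta)$, exploiting mean convexity only through $\Lap r\leq -H_0/2$ and the self-similar identity $F''=(F')^2/A$ to control the capillary term. Your computation is correct (the term $\frac{\de^2F''}{(\de^2+(F')^2)^{3/2}}\leq \de^2/(AF')\leq 1/A$ once $F'\geq 1\geq\de$, and the reaction term is $\leq -H_0/(4\sqrt 2)$ once $A/\delta\geq 4/H_0$), and the comparison-on-$\{r\leq\delta-\eta\}$ argument followed by $\eta\to 0$ is sound and correctly uses the favorable $-\s u$ zero-order term. What each route buys: your version is more elementary in that it needs only smoothness of the distance function and the Riccati comparison $\Lap r\leq -H_0/2$ near a mean convex boundary, not short-time existence and regularity of the level set flow; the paper's version reuses a smooth MCF solution, which makes it structurally transparent and also is the same style used for the lower bound in Remark \ref{remark_uniformlower} with $\p(t)=ct(T-t)$.
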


\begin{proof}
As in the proof of Lemma \ref{boundary_grad_est1} we will construct a suitable barrier function, but this time by bending the smooth solution to infinity (c.f. \cite[Lemma 18]{BM}).

By mean convexity, for $T_0=T_0(K_0)>0$ small enough the restricted time of arrival function $u:K_0\setminus K_{T_0}\to \mathbb{R}$ is smooth and satisfies the estimates
\begin{equation}\label{smooth_der_bds}
C^{-1} \leq |D u| \leq C,\;\;\;\; |\mathrm{Hess} u|\leq C,
\end{equation}
for some $C=C(K_0)<\infty$. Recall also that $u$ satisfies the equation
\begin{equation}\label{levelseteq}
\mathrm{div}\left(\frac{Du}{\abs{Du}}\right)+\frac{1}{\abs{Du}}=0.
\end{equation}
For $T\in(0,T_0)$, let $\p:[0,T)\rightarrow [0,\infty)$, $\p(t)=\frac{1}{T-t}-\frac{1}{T}$. We will now show that for $T$ small enough the function $v=\p(u)$ is a supersolution of equation \eqref{double_reg}. To this end, we compute
\begin{align}
&\mathrm{div}\left(\frac{D v}{\sqrt{\de^2+|D v|^2}}\right)=\mathrm{div}\left(\frac{\p' Du }{\sqrt{\de^2+|\p' D u|^2}}\right)\\
&\qquad=\frac{\p''|D u|^2}{\sqrt{\de^2+|D v|^2}}+\frac{\p'}{\sqrt{\de^2+|D v|^2}}\left(\Delta u -\frac{\p'\p''|D u|^4+\p'^2\mathrm{Hess}\,u\, (D u,D u)}{\de^2+|D v|^2} \right)\nonumber\\
&\qquad=\frac{\de^2\p''|D u|^2}{\left(\de^2+|D v|^2\right)^{3/2}}-\frac{\p'}{\sqrt{\de^2+|D v|^2}}+\frac{\eps^2\p'}{(\eps^2+\abs{Dv}^2)^{3/2}} \mathrm{Hess}\,u\left(\frac{Du}{\abs{Du}},\frac{Du}{\abs{Du}}\right),\nonumber
\end{align}
where we used equation \eqref{levelseteq} in the last step. Now observe that
\begin{equation}
\frac{|D u|^2}{\de^2+|D v|^2} \leq \frac{1}{\p'^2},\qquad\qquad \frac{\p'}{\eps^2+\abs{Dv}^2}\leq \frac{1}{\p' \abs{Du}^2}.
\end{equation}
Thus, taking also into account \eqref{smooth_der_bds} we conclude that
\begin{multline}
\sqrt{\de^2+|D v|^2}\left(\textrm{div}\left( \frac{Dv}{\sqrt{\eps^2+\abs{Dv}^2}}\right)-\sigma v\right)+1,\\
\leq 2\eps^2(T-t)-\frac{1}{(T-t)^2}+C\eps^2 (T-t)^2+1
\end{multline}
which is negative if $T=T(K_0)$ is sufficiently small. Thus, for such $T$, the function $v$ is a supersolution of equation \eqref{double_reg} with $v=0$ on $\partial K_0$ and $v\to\infty$ on $\partial K_T$. Therefore,
\begin{equation}
\sup_{\partial K_0}|D u_{\de,\s}| \leq \sup_{\partial K_0}|D v| \leq \frac{C}{T^2}.
\end{equation}
This proves the lemma.
\end{proof}

\begin{remark}[Uniform lower bound]\label{remark_uniformlower}
Similarly, considering the function $\p(t)=ct(T-t)$ we see that there is a constant $c=c(K_0)>0$ such that $\inf_{\partial K_0}\abs{Du_{\eps,\sigma}}\geq c>0$.
\end{remark}

\begin{proof}[{Proof of Theorem \ref{V_bd_2}}]
Recalling that $V=H+\sigma u_{\eps,\sigma}=(\eps^2+\abs{Du_{\eps,\sigma}}^2)^{-1/2}$, the theorem follows by combining Proposition \ref{V_better_bd} and Lemma \ref{boundary_grad_est2}.
\end{proof}

\section{Double approximate estimate for $|A|/H$}\label{sec_estforAH}

The purpose of this section is to prove the following estimate.
\begin{theorem}\label{main_curv_bound}
There exist constants $\rho=\rho(K_0)<\infty$, $C=C(K_0)<\infty$, $\de_0=\de_0(K_0)>0$ and $\s_0=\s_0(K_0)>0$, such that
\begin{equation}
\frac{\abs{A}\left(x,\tfrac{1}{\eps}u_{\eps,\sigma}(x)\right)}{H\left(x,\tfrac{1}{\eps}u_{\eps,\sigma}(x)\right)+\sigma u_{\eps,\sigma}(x)} \leq C e^{\rho u_{\de,\s}(x)}
\end{equation}
for all $x\in K_0$, whenever $u_{\eps,\sigma}$ is a solution of \eqref{double_reg} with $\eps<\eps_0$ and $\sigma<\sigma_0$.
\end{theorem}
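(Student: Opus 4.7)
The plan is to mimic the smooth mean curvature flow argument -- combining the evolution equations for $|A|^2$ and $H$ -- at the level of the double approximator $M^{\eps,\sigma}=\graph(u_{\eps,\sigma}/\eps)$, applying the elliptic maximum principle to the carefully tuned test function
\begin{equation*}
F = \frac{\abs{A}+\Lambda\sigma u_{\eps,\sigma}}{(H+\sigma u_{\eps,\sigma})\,e^{\rho u_{\eps,\sigma}}},
\end{equation*}
where $\Lambda,\rho<\infty$ are constants depending only on $K_0$ to be chosen. The factor $e^{-\rho u}$ is the analog of the $e^{-\rho t}$ factor in the smooth case and is there to absorb the ambient curvature contributions; the additive term $\Lambda\sigma u$ is there to absorb the lower order errors coming from having replaced $H$ by $H+\sigma u$ in the denominator.

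First I would derive two PDE identities on $M^{\eps,\sigma}$. For $\abs{A}^2$ one applies Simons' identity for hypersurfaces in a Riemannian manifold, giving
\begin{equation*}
\tfrac{1}{2}\Lap\abs{A}^2 = \abs{\nabla A}^2 + \langle A,\nabla^2 H\rangle - \abs{A}^4 + H\,\mathrm{tr}(A^3) + \mathcal{R},
\end{equation*}
where $\mathcal{R}$ collects the ambient curvature and curvature-derivative contributions (as in the evolution equation for $\abs{A}^2$ recalled in the introduction). For $H+\sigma u$ one uses that on $M^{\eps,\sigma}$ we have $H+\sigma u=V=\tfrac{1}{\eps}\langle\tau,\nu\rangle$ and $u|_M=\eps z$, so Lemma~\ref{lap_calc} -- used exactly as in Corollary~\ref{cor_evolution_vw} -- yields a formula of the schematic shape $\Lap(H+\sigma u)=\langle\tau,\nabla H\rangle-(\Rc(\nu,\nu)+\abs{A}^2)(H+\sigma u)+(\text{$\sigma$--correction})$. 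Combining these in the standard way, as in the smooth case, the pure quartic reaction $\abs{A}^4$ cancels modulo a good term controlled by the Huisken--Sinestrari improved Kato inequality, and one is left with an elliptic inequality for $\abs{A}^2/(H+\sigma u)^2$ of Bochner type, in which the Kato term becomes nonnegative precisely when $\abs{A}/(H+\sigma u)$ is sufficiently large.

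The maximum principle argument is then direct. Suppose $F$ attains its maximum at an interior point $p_0\in M^{\eps,\sigma}$; I want to show either $F(p_0)\leq C$ or a contradiction arises. Using $\nabla F(p_0)=0$ to exchange $\nabla\abs{A}$ for $\nabla H$ and $\nabla u$, the condition $\Lap F(p_0)\leq 0$ gives an algebraic inequality in which the ambient curvature terms $\mathcal{R}$ are controlled via the lower bound $H+\sigma u\geq ce^{-\rho u}$ from Theorem~\ref{V_bd_2} -- this forces $\rho$ to be chosen larger than a constant multiple of the $C^1$-norm of the curvature of $N$ on a neighborhood of $K_0$. The term $\Lambda\sigma u$ with $\Lambda$ large absorbs the leftover $\sigma$-order error from the denominator; the smallness hypotheses $\eps<\eps_0$, $\sigma<\sigma_0$ enter precisely to make these corrections subdominant. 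On the boundary $\partial M^{\eps,\sigma}=\partial K_0$, where $u\equiv 0$, the value of $F$ reduces to $\abs{A}/H$ of the graph, and the uniform two-sided boundary gradient estimates (Lemma~\ref{boundary_grad_est2} and Remark~\ref{remark_uniformlower}) give $(\eps,\sigma)$-independent control on the geometry of $M^{\eps,\sigma}$ near $\partial K_0$, hence $F|_{\partial M}\leq C(K_0)$. Combining boundary and interior bounds proves the theorem.

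The main obstacle, and the reason for the somewhat intricate form of $F$, is that the three corrections must be made to fit together: the exponential factor $e^{-\rho u}$ must dominate the ambient curvature errors from Simons, the $\Lambda\sigma u$ correction must dominate the $\sigma u$-perturbation of the denominator without spoiling the leading Bochner structure, and the improved Kato term must still be usable after all these corrections. Small changes in the precise definition of $F$ would break one of these three balances.
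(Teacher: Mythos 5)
Your proposal takes essentially the same approach as the paper: the same test function $G=\tfrac{\abs{A}+\Lambda\sigma u}{(H+\sigma u)e^{\rho u}}$, the Simons identity for the second fundamental form, a curvature-corrected improved Kato inequality (which the paper derives as Proposition~\ref{strong_kato}), and a maximum-principle argument that uses $\nabla G=0$ to trade $\nabla\abs{A}$ for $\nabla H$ and then lets the exponential weight, the $\Lambda\sigma u$ correction, and the uniform boundary gradient estimates together with Theorem~\ref{V_bd_2} handle the remaining cases. The only notable bookkeeping difference is that the paper works with $\Delta\abs{A}$ directly (Proposition~\ref{A_lap}) and splits the interior maximum into the regimes $\abs{A}\lesssim\max(1,\sigma u,V)$ versus $\abs{A}\gtrsim\max(1,\sigma u,V)$, invoking Theorem~\ref{V_bd_2} in the former case rather than using it to absorb ambient curvature inside the Bochner computation itself.
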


\begin{remark}
Taking the limits $\sigma\to 0$ and $\eps\to 0$ the estimate from Theorem \ref{main_curv_bound} yields the estimate for $\abs{A}/H$ from Theorem \ref{estimate2}, see Section \ref{pass_lim} for the proof.
\end{remark}

We will prove Theorem \ref{main_curv_bound} by applying the maximum principle to the function
\begin{equation}\label{def_ofg}
G=\frac{\abs{A}+\Lambda\sigma u}{Vw},
\end{equation}
where $V=H+\sigma u$, $w=e^{\eps\rho z}$, and where $\rho<\infty$ and $\Lambda<\infty$ will be specified later. As will become clear below, the extra term $\Lambda \sigma u$ is crucial for the maximum principle. We begin by computing the Laplacian of the norm of the second fundamental form.
 
\begin{proposition}\label{A_lap}
At any interior point with $\abs{A}\neq 0$ we have
\begin{equation}
\Delta |A| -\tfrac{|\nabla A|^2-|\nabla|A||^2}{|A|}\geq \tfrac{1}{\de}\langle \tau,\nabla |A| \rangle-\abs{A}^3 -C\s u|A|^2-C\max\left(1,\sigma u,\abs{A}\right).
\end{equation} 
\end{proposition}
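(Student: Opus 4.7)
The plan is to combine Simons' identity on $M=M^{\eps,\sigma}\subset N\times\mathbb{R}_+$ with the graph equation $H+\sigma u=V$, and to exploit a cancellation between two contributions involving $\mathrm{tr}(h^3)$ that arise from each source.

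First I would invoke Simons' identity in the Riemannian ambient. Since the ambient curvature and its derivative are bounded by $C(K_0)$, contracting Simons with $h^{ij}$ and combining $\Delta|A|^2=2h^{ij}\Delta h_{ij}+2|\nabla A|^2$ with $\Delta|A|^2=2|A|\Delta|A|+2|\nabla|A||^2$ yields
\[
|A|\Delta|A|-(|\nabla A|^2-|\nabla|A||^2)=h^{ij}\nabla_i\nabla_j H+H\,\mathrm{tr}(h^3)-|A|^4+\mathcal{Q},
\]
where the Simons curvature correction satisfies $|\mathcal{Q}|\leq C(|A|^2+|A|)$.

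Second, I would unpack $h^{ij}\nabla_i\nabla_j H$ using the graph equation. Since $H=V-\sigma u$ and $u=\eps z$ on $M$ so that $\nabla^M u=\eps\tau^\top$, and since Lemma \ref{lap_calc} gives $\nabla_j\langle\tau,\nu\rangle=h_{jk}\tau^\top_k$, one has $\nabla_j H=\tfrac{1}{\eps}h_{jk}\tau^\top_k-\sigma\eps\tau^\top_j$. The auxiliary identity $\nabla_i\tau^\top_k=-\eps V h_{ik}$ (consistent with the paper's sign convention, as a cross-check against Corollary \ref{cor_evolution_vw} confirms) then gives, after a second derivative and contraction,
\[
h^{ij}\nabla_i\nabla_j H=\tfrac{1}{\eps}h^{ij}(\nabla_i h_{jk})\tau^\top_k-V\,\mathrm{tr}(h^3)+\sigma\eps^2 V|A|^2.
\]
Substituting into Simons, the two $\mathrm{tr}(h^3)$ terms collapse to $(H-V)\mathrm{tr}(h^3)=-\sigma u\,\mathrm{tr}(h^3)$, yielding
\[
|A|\Delta|A|-(|\nabla A|^2-|\nabla|A||^2)=\tfrac{1}{\eps}h^{ij}(\nabla_i h_{jk})\tau^\top_k-|A|^4-\sigma u\,\mathrm{tr}(h^3)+\sigma\eps^2 V|A|^2+\mathcal{Q}.
\]

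Third, I would extract the first-order term via Codazzi. The Codazzi identity reads $\nabla_i h_{jk}-\nabla_k h_{ij}=\bar R(e_i,e_k,e_j,\nu)$. In the product $N\times\mathbb{R}_+$ the ambient Riemann tensor vanishes on $\tau$, so the correction depends only on the $TN$-projections of its entries; using the decomposition $\tau^{\top,N}=-\eps V\nu^N$, the contracted correction $h^{ij}\bar R(e_i,\tau^\top,e_j,\nu)$ collapses to $-\eps V h^{ij}R^N(e_i^N,\nu^N,e_j^N,\nu^N)$, which is bounded in absolute value by $C\eps V|A|$. Together with $h^{ij}\nabla_k h_{ij}=|A|\nabla_k|A|$, this gives
\[
\tfrac{1}{\eps}h^{ij}(\nabla_i h_{jk})\tau^\top_k=\tfrac{|A|}{\eps}\langle\tau,\nabla|A|\rangle+E,\qquad |E|\leq CV|A|\leq C(|A|^2+\sigma u|A|),
\]
using $V=H+\sigma u$ and $|H|\leq\sqrt{n}\,|A|$.

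Assembling, the bounds $|\mathrm{tr}(h^3)|\leq|A|^3$, $\sigma\eps^2 V|A|^2\geq 0$, and $|\mathcal{Q}|\leq C(|A|^2+|A|)$ give
\[
|A|\Delta|A|-(|\nabla A|^2-|\nabla|A||^2)\geq\tfrac{|A|}{\eps}\langle\tau,\nabla|A|\rangle-|A|^4-\sigma u|A|^3-C(|A|^2+|A|+\sigma u|A|).
\]
Dividing by $|A|$ and bundling constants into $C\max(1,\sigma u,|A|)$ yields the claim. The main obstacle is the apparent $1/\eps$ factor in $\nabla^2 H$: it is precisely neutralized by the $(H-V)$-cancellation produced by the graph equation, with the leftover $\eps^{-1}$ in the Codazzi correction tamed by the product structure of $N\times\mathbb{R}_+$ (which forces the correction through $\nu^N$ and supplies a compensating factor $\eps V$ via $\tau^{\top,N}=-\eps V\nu^N$).
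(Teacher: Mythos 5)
Your proposal is correct and follows essentially the same route as the paper: start from Simons' inequality, compute the Hessian of $H=\tfrac{1}{\eps}\langle\tau,\nu\rangle-\sigma u$ using $\nabla^2 u=-\eps\langle\tau,\nu\rangle A$, exploit the cancellation $(H-V)\,\mathrm{tr}(h^3)=-\sigma u\,\mathrm{tr}(h^3)$, and use Codazzi together with the product structure of $N\times\mathbb{R}_+$ (which forces the Codazzi correction through $\tau^{\top,N}=-\eps V\nu^N$, supplying the compensating $\eps V$ factor) to convert the $\tfrac{1}{\eps}$-term into $\tfrac{|A|}{\eps}\langle\tau,\nabla|A|\rangle$ plus an error of size $CV|A|$. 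The only cosmetic difference is bookkeeping: the paper packages the Codazzi step directly into the identity $\nabla^2\langle\tau,\nu\rangle=\nabla_{\tau^\top}A-(A^2+\mathrm{Rm}(\nu,\cdot,\nu,\cdot))\langle\tau,\nu\rangle$, whereas you apply Codazzi after contracting with $h^{ij}$; the two are equivalent.
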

\begin{proof}
We recall Simon's inequality for hypersurfaces in Riemannian manifolds \cite{Simons_identity},
\begin{equation}
\tfrac{1}{2}\Delta |A|^2-|\nabla A|^2 \geq \langle A,\nabla^2 H \rangle -|A|^4+H\textrm{tr}(A^3)-C(|A|+|A|^2),
\end{equation}
where $C=C(\max_{K_0}\abs{\textrm{Rm}},\max_{K_0}\abs{\nabla\textrm{Rm}})$. To find the Hessian of the mean curvature in our case we use the formula $H=\tfrac{1}{\eps}\langle \tau,\nu\rangle-\sigma u$, and compute (c.f. Lemma \ref{lap_calc}):
\begin{equation}
\nabla^2 \langle \tau,\nu\rangle=\nabla_{\tau^\top} A-\left(A^2+\textrm{Rm}(\nu,\cdot,\nu,\cdot)\right)\langle \tau,\nu\rangle,
\end{equation}
and
\begin{equation}\label{hessianofu}
\nabla^2 u=-\eps \langle \tau,\nu\rangle A.
\end{equation}
It follows that
\begin{equation}
\langle A,\nabla^2 H\rangle \geq \tfrac{1}{\eps} \langle A,\nabla_{\tau^\top} A\rangle -(H+\sigma u)\textrm{tr}(A^3)-C(\sigma u+\abs{A})\abs{A},
\end{equation}
and thus
\begin{equation}
\tfrac{1}{2}\Delta |A|^2 -\abs{\nabla A}^2\geq \frac{1}{2\de}\langle \tau,\nabla |A|^2 \rangle-\abs{A}^4-\sigma u\textrm{tr}(A^3)-C(1+\sigma u+\abs{A})\abs{A}.
\end{equation}
This implies the claim.
\end{proof}

To make use of the gradient term, we prove the following improved Kato inequality.

\begin{proposition}\label{strong_kato}
There exist constants $c=c(n)<1$ and $C=C(\max_{K_0}\abs{\textrm{Rm}})<\infty$ such that
\begin{equation}
\abs{\D{\abs{A}}}\leq c \abs{\D A}+2\abs{\D H}+C.
\end{equation}
\end{proposition}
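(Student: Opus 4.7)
The plan is to diagonalize $A$ at the point under consideration, exploit the near-total-symmetry of $\nabla A$ coming from the Codazzi equation, and apply a Schoen--Simon--Yau style algebraic inequality. Fix $p$ with $|A|(p)\neq 0$, choose an orthonormal frame $\{e_i\}$ at $p$ in which $h_{ij}=\lambda_i\delta_{ij}$, and set $T_{ijk}=\nabla_k h_{ij}$. The Codazzi identity in a Riemannian ambient gives $T_{ijk}-T_{kji}=R^{\textrm{amb}}(e_k,e_i,e_j,\nu)$, so $T$ is totally symmetric up to a tensor of pointwise norm bounded by $\bar C=\bar C(\max_{K_0}|\textrm{Rm}|)$. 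I will first carry out the computation pretending $T$ is exactly symmetric and then absorb the resulting defects into the additive constant via Young's inequality at the end.

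Starting from $|A|\,\nabla_k|A|=\sum_i\lambda_i T_{iik}$, Cauchy--Schwarz in $i$ gives
\[
|\nabla|A||^{2} \leq \sum_{i,k}T_{iik}^{\,2}=:|M|^2.
\]
Define the traceless symmetric part by
\[
\widetilde T_{ijk}:=T_{ijk}-\tfrac{1}{n+2}\bigl(g_{ij}\nabla_k H+g_{ik}\nabla_j H+g_{jk}\nabla_i H\bigr),
\]
so that $\sum_i\widetilde T_{iic}=0$ for every $c$; a direct computation gives $|T|^2=|\widetilde T|^2+\tfrac{3}{n+2}|\nabla H|^2$, and writing $M=\widetilde M+P$ with $\widetilde M_{ik}:=\widetilde T_{iik}$ yields $|P|^2=\tfrac{n+8}{(n+2)^2}|\nabla H|^2$.

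The core combinatorial step is the Schoen--Simon--Yau type bound $|\widetilde M|^2\leq\tfrac{n}{n+2}|\widetilde T|^2$, which I would prove by splitting $|\widetilde T|^2$ according to the three possible multiplicity patterns of an ordered triple of indices --- $\{i,i,i\}$, $\{i,i,k\}$ with $i\neq k$, and $\{i,j,k\}$ all distinct --- and then using tracelessness in the form $\widetilde T_{iii}=-\sum_{k\neq i}\widetilde T_{kki}$ together with Cauchy--Schwarz to obtain $\sum_i\widetilde T_{iii}^{\,2}\leq(n-1)\sum_{i\neq k}\widetilde T_{iik}^{\,2}$, which reduces the desired inequality to a nonnegative linear combination of squares. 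Expanding $|M|^2=|\widetilde M|^2+2\langle\widetilde M,P\rangle+|P|^2$ and applying Young's inequality $2|\langle\widetilde M,P\rangle|\leq\tfrac{1}{n}|\widetilde M|^2+n|P|^2$ then yields
\[
|M|^2 \leq \tfrac{n+1}{n+2}|\nabla A|^2+\tfrac{(n+1)(n+8)}{(n+2)^2}|\nabla H|^2+C(\bar C).
\]
Since $(n+1)(n+8)/(n+2)^2\leq 2<4$ for every $n\geq 1$, taking the square root and using $\sqrt{a+b+c}\leq\sqrt a+\sqrt b+\sqrt c$ gives the claim with $c(n):=\sqrt{(n+1)/(n+2)}<1$.

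The main obstacle is careful bookkeeping of the Codazzi defects: each invocation of ``total symmetry of $T$'' produces an $O(\bar C)$ error, and these must be separated from the main terms via Young's inequality at every step so that they contribute only to the additive constant $C=C(\max_{K_0}|\textrm{Rm}|)$ rather than to the coefficients multiplying $|\nabla A|$ or $|\nabla H|$.
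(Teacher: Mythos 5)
Your argument is correct and leads to the stated inequality (in fact with the slightly sharper coefficient $\sqrt 2$ in front of $|\D H|$), but it follows a genuinely different route from the paper. Both proofs begin the same way: decompose $\D A$ into its totally symmetric part plus a Codazzi defect of size $O(\max_{K_0}|\textrm{Rm}|)$, and then split the symmetric part into a pure-trace piece (controlled by $|\D H|$) and a totally traceless piece. After that the arguments diverge. The paper does not diagonalize $A$; instead it writes $|A|\,|\D_X|A||=|\langle \D A, X\otimes A\rangle|$ for a unit vector $X$, observes that pairing with the traceless symmetric part of $\D A$ only sees $(X\otimes A)^{\mathrm{sym},0}$, and then computes $|(X\otimes A)^{\mathrm{sym},0}|^2=\tfrac13|A|^2+\bigl(\tfrac23-\tfrac{4}{3(n+2)}\bigr)|A(X,\cdot)|^2<|A|^2$; the gain $c(n)<1$ comes from the fact that $X\otimes A$ has a nontrivial trace part. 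You instead diagonalize $A$, bound $|\D|A||^2$ by $|M|^2=\sum_{ik}T_{iik}^2$ via Cauchy--Schwarz, and prove the Schoen--Simon--Yau type estimate $|\widetilde M|^2\le\tfrac{n}{n+2}|\widetilde T|^2$ by splitting $|\widetilde T|^2$ according to index-multiplicity patterns and using tracelessness via $\widetilde T_{iii}=-\sum_{k\ne i}\widetilde T_{kki}$; I checked that the resulting inequality $\sum_i\widetilde T_{iii}^2\le (n-1)\sum_{i\ne k}\widetilde T_{iik}^2+3n\sum_{i<j<k}\widetilde T_{ijk}^2$ does reduce exactly to the claimed bound. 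The paper's route gives a slightly better constant $c(n)^2=1-\tfrac{4}{3(n+2)}$ versus your $c(n)^2=\tfrac{n+1}{n+2}$, and is a bit shorter since it sidesteps the combinatorial case analysis; your route is closer to the classical SSY/Huisken improved Kato inequality and is arguably more transparent about where the gain comes from. One caveat in the bookkeeping: your $\widetilde T$ is constructed to kill the $(12)$-trace exactly, but total symmetry and the remaining traces hold only up to Codazzi errors, so to invoke the SSY inequality cleanly you should replace $\widetilde T$ by the exact totally-symmetric-traceless projection $T^{\mathrm{sym},0}$ (which differs from $\widetilde T$ by $O(\max_{K_0}|\textrm{Rm}|)$) and absorb the difference by Young's inequality; you flag this explicitly, and since $\tfrac{n}{n+2}<1$ strictly there is room to do so, so the argument goes through.
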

\begin{proof}
 For any unit vector $X$, we will derive an estimate for the quantity
\begin{equation}
\abs{A}\abs{\D_X\abs{A}}=\frac{1}{2}\abs{\D_X\abs{A}^2}=\abs{\langle \D A,X\otimes A\rangle}\, .
\end{equation}
Let $(\D A)^{\textrm{sym}}$ be the totally symmetric part of the 3-tensor $\D A$, i.e.
\begin{equation}
(\D A)^{\textrm{sym}}_{ijk}=\tfrac{1}{3}(\D_i A_{jk}+\D_j A_{ik}+\D_k A_{ij}).
\end{equation}
Using the Codazzi identity and the bound $\abs{\textrm{Rm}}\leq C$ we see that
\begin{equation}\label{est_cod1}
\abs{(\D A)^{\textrm{sym}}-\D A}\leq C.
\end{equation}
Next, observe that any totally symmetric 3-tensor $T$ can be decomposed as $T=T^{\textrm{tr}}+T^0$, where
\begin{equation}
T^{\textrm{tr}}_{ijk}=\frac{1}{n+2}\left( T_{ppi}g_{jk}+T_{ppj}g_{ik}+T_{ppk}g_{ij} \right)
\end{equation}
is the trace-part, and $T^0$ is the totally traceless part.\\
Using again the Codazzi identity and the bound $\abs{\textrm{Rm}}\leq C$ we see that
\begin{equation}\label{est_cod2}
\abs{(\D A)^{\textrm{sym,tr}}}\leq \frac{3\sqrt{n}}{n+2}\abs{\nabla H}+C.
\end{equation}
Combining \eqref{est_cod1} and \eqref{est_cod2} we obtain the estimate
\begin{equation}
\abs{\langle \D A,X\otimes A\rangle}\leq \abs{\langle (\D A)^{\textrm{sym,0}},X\otimes A\rangle}+\frac{3\sqrt{n}}{n+2}\abs{\nabla H}\abs{A}+C\abs{A}.
\end{equation}
Observing that $\abs{(\D A)^{\textrm{sym,0}},(X\otimes A)\rangle}\leq \abs{\D A} \abs{(X\otimes A)^{\textrm{sym,0}}}$, the remaining task is to estimate the norm of $(X\otimes A)^{\textrm{sym,0}}$. This can be done by a straightforward computation:
\begin{align}
\abs{(X\otimes A)^{\textrm{sym,0}}}^2&=\abs{(X\otimes A)^{\textrm{sym}}}^2-\frac{4}{3(n+2)}\abs{A(X,.)}^2\\
&=\frac{1}{3}\abs{A}^2+\left(\frac{2}{3}-\frac{4}{3(n+2)}\right)\abs{A(X,.)}^2\, .
\end{align}
Putting everything together, the proposition follows.
\end{proof}

We will apply Proposition \ref{strong_kato} in combination with the following lemma.

\begin{lemma}\label{lemma_gradh}
At any critical point of $G$ we have the estimate
\begin{equation}
\abs{\nabla H}\leq \frac{V}{\abs{A}}\abs{\nabla{\abs{A}}}+\frac{1}{\abs{A}}\eps\sigma \Lambda V+\eps\rho V+\eps\sigma.
\end{equation}
\end{lemma}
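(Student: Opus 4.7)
The plan is to take the critical point condition $\nabla G=0$ and unpack it into an explicit formula for $\nabla H$, then estimate term by term. Writing $G=P/Q$ with $P=\abs{A}+\Lambda\sigma u$ and $Q=Vw$, the equality $Q\nabla P=P\nabla Q$ at a critical point reads
\begin{equation*}
\nabla\abs{A}+\Lambda\sigma\nabla u \;=\; G\bigl(w\,\nabla V+V\,\nabla w\bigr).
\end{equation*}
Substituting $\nabla V=\nabla H+\sigma\nabla u$ and $\nabla w=\eps\rho\, w\,\tau^\top$ (from \eqref{dump_der} of Lemma \ref{lap_calc} with $m=\eps\rho$), and using $Gw=P/V$, I will solve for $\nabla H$ to obtain
\begin{equation*}
\nabla H \;=\; \frac{V}{P}\bigl(\nabla\abs{A}+\Lambda\sigma\nabla u\bigr) \;-\; \sigma\nabla u \;-\; V\eps\rho\,\tau^\top.
\end{equation*}

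The only geometric input beyond bookkeeping is the identity $\nabla u=\eps\tau^\top$ on $M=\graph(u_{\eps,\sigma}/\eps)$. This is immediate because on $M$ the coordinate $z$ satisfies $u=\eps z$, and the intrinsic tangential gradient of the height function $z$ on any graph is exactly $\tau^\top$. Since $\tau$ is a unit vector, this yields $\abs{\nabla u}\leq \eps$.

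Plugging this bound into the formula for $\nabla H$, using the trivial inequality $P=\abs{A}+\Lambda\sigma u\geq \abs{A}$ to simplify the first two terms, and using $\abs{\tau^\top}\leq 1$ in the last term, the triangle inequality gives precisely
\begin{equation*}
\abs{\nabla H}\;\leq\; \frac{V}{\abs{A}}\abs{\nabla\abs{A}} \;+\; \frac{1}{\abs{A}}\eps\sigma\Lambda V \;+\; \eps\rho V \;+\; \eps\sigma,
\end{equation*}
as claimed. I do not expect any real obstacle: the computation is purely algebraic once the correct interpretation of $\nabla u$ as an intrinsic gradient on $M$ is used. The only point requiring minor care is to keep the distinction between the ambient gradient and the intrinsic gradient on $M$ straight, so that $\nabla u=\eps\nabla z=\eps\tau^\top$ is applied correctly.
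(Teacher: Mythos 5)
Your proposal is correct and takes essentially the same route as the paper: both unpack the critical-point condition $\nabla G=0$ (the paper writes it as $\nabla\log G=0$, you as $Q\nabla P=P\nabla Q$, which is the same thing), substitute $\nabla w=\eps\rho w\,\tau^\top$ and $\nabla u=\eps\tau^\top$, solve for $\nabla H$, and estimate with $\abs{A}+\Lambda\sigma u\geq\abs{A}$ and $\abs{\tau^\top}\leq 1$. The only cosmetic difference is your use of the quotient rule for $\nabla G$ in place of the logarithmic form, and your explicit remark on why $\nabla u=\eps\tau^\top$, which the paper states without comment.
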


\begin{proof}
The equation $\nabla \log G=0$ can be written in the form
\begin{equation}\label{eq_critical}
\frac{1}{V}(\nabla H+\sigma \nabla u)=\nabla\log(\abs{A}+\Lambda\sigma u)-\nabla\log w.
\end{equation}
Observing that $\nabla\log w=\eps\rho\tau^\top$ and $\nabla u=\eps \tau^\top$, and solving for $\nabla H$ we obtain
\begin{equation}
\nabla{H}=\left(\frac{\nabla{\abs{A}}+\Lambda\sigma\eps \tau^\top}{\abs{A}+\Lambda\sigma u}-\eps\rho\tau^\top\right)V-\eps\sigma \tau^\top.
\end{equation}
The claim follows.
\end{proof}

We are now ready to prove the main theorem of this section.

\begin{proof}[Proof of Theorem \ref{main_curv_bound}]
Throughout the proof we write $C=C(K_0)<\infty$ for a constant that can change from line to line. This should not be confused with $c=c(n)<1$, which is a fixed dimensional constant given by Proposition \ref{strong_kato}. 

Consider the function $G$ defined in \eqref{def_ofg}. The parameters $\rho$ and $\Lambda$ will be specified in the last line of the proof (depending only on the dimension and geometry of $K_0$). For now, we only impose the condition that $\rho\geq 2\rho_1$, where $\rho_1=\rho_1(K_0)$ is the constant from Theorem \ref{V_bd_2}. We will choose $\eps_0=\sigma_0=\max(\rho,\Lambda)^{-1}$. Thus, tacitly assuming that $\eps<\eps_0$ and $\sigma<\sigma_0$, we have inequalities like $\sigma \Lambda< 1$ and $\eps \rho< 1$ at our disposal.

Theorem \ref{V_bd_2}, Remark \ref{remark_uniformlower} and DeGiorgi-Nash-Moser and Schauder estimates up to the boundary give a uniform upper bound for $\sup_{\partial K_0}G$. Thus, if the maximum of $G$ occurs at the boundary $\partial  K_0$ we are done.
 Suppose now the maximum of $G$ is attained at an interior point $(x_0,z_0)\in M\setminus \partial M$. If $\abs{A}< \tfrac{4}{1-c} \max(1,\sigma u,V)$ at $(x_0,z_0)$, then Theorem \ref{V_bd_2} together with the constraint $\rho\geq 2\rho_1$ yields $G\leq C$ and we are done. Suppose now
\begin{equation}\label{ass_max}
\abs{A}\geq \tfrac{4}{1-c} \max(1,\sigma u,V).
\end{equation}
Condition \eqref{ass_max} will allow us to absorb lower order terms. For example, all lower order terms in the inequality from Lemma \ref{lemma_gradh} can be safely estimated by $C\abs{A}$, giving:
\begin{equation}
\abs{\nabla{H}}\leq \tfrac{1-c}{4}\abs{\nabla{\abs{A}}}+C\abs{A}.
\end{equation}
Combining this with Proposition \ref{strong_kato} and using again condition \eqref{ass_max} we infer that
\begin{equation}\label{improved_kato_version}
\abs{\nabla A}^2-\abs{\nabla\abs{A}}^2\geq \delta \abs{\nabla\abs{A}}^2-C\abs{A}^2,
\end{equation}
for some $\delta=\delta(n)>0$. This will be an important ingredient for the estimate below.

Since $(x_0,z_0)$ is a maximum point of $G$ we have $\Lap G\leq 0$ and thus
\begin{equation}\label{longeq1}
\Delta(|A|+\Lambda\s u)Vw-(|A|+\Lambda\s u)\Delta(Vw) \leq 0,
\end{equation}
where we also used that $\nabla G=0$. Using Proposition \ref{A_lap}, the improved Kato estimate \eqref{improved_kato_version}, the trace of equation \eqref{hessianofu} and condition \eqref{ass_max} we obtain
\begin{equation}\label{longeq2}
\Delta(|A|+\Lambda\s u)\geq \tfrac{\delta \abs{\nabla \abs{A}}^2}{|A|}+ \tfrac{1}{\de}\langle \tau,\nabla |A| \rangle-\abs{A}^3 -C\s u |A|^2-C\abs{A}.
\end{equation}
Similarly, by Corollary \ref{cor_evolution_vw} and condition \eqref{ass_max} we have
\begin{equation}\label{longeq3}
-\Delta (V w)\geq\left(-C+|A|^2+\rho\langle\tau,\nu\rangle^2-\sigma u\right)Vw-2\eps\rho\langle \tau,\nabla(V w)\rangle-\tfrac{1}{\eps}\langle \tau, \nabla V \rangle w.
\end{equation}
When substitution \eqref{longeq2} and \eqref{longeq3} into \eqref{longeq1} we will use the following claim.
\begin{claim}\label{claim_gradientterms}
The contribution from the $\langle \tau,\nabla\,\cdot\,\rangle$-terms can be estimated as:
\begin{multline}
\tfrac{1}{\eps}\langle\tau,\nabla\abs{A}\rangle Vw-\left(\abs{A}+\Lambda\sigma u\right)\left(2\eps\rho \langle \tau,\nabla (Vw)\rangle+\tfrac{1}{\eps}\langle \tau,\nabla V\rangle w\right)\\
\geq \left(-2\eps\rho\langle \tau,\nabla\abs{A}\rangle+(\rho\abs{A}-3)\abs{\tau^\top}^2 \right)Vw.
\end{multline}
\end{claim}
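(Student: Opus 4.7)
The plan is to carry out a direct algebraic manipulation, using the critical-point identity derived from $\nabla\log G=0$ to express $\nabla V$ and $\nabla(Vw)$ in terms of $\nabla|A|$ and $\tau^\top$, then collect and compare terms. Two ingredients are used repeatedly: the identity $\langle\tau,\tau^\top\rangle = |\tau^\top|^2$ (since $\tau$ is a unit vector), together with $\nabla u = \eps\tau^\top$ and $\nabla\log w=\eps\rho\tau^\top$ (from Lemma \ref{lap_calc} with $m=\eps\rho$).

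First, combining $\nabla G=0$ with $\nabla V = \nabla H+\sigma\eps\tau^\top$ and the formula of Lemma \ref{lemma_gradh}, I would derive
\begin{equation*}
\nabla V = V\left(\frac{\nabla|A|+\Lambda\sigma\eps\tau^\top}{|A|+\Lambda\sigma u}-\eps\rho\tau^\top\right), \qquad \nabla(Vw)=\frac{Vw}{|A|+\Lambda\sigma u}\bigl(\nabla|A|+\Lambda\sigma\eps\tau^\top\bigr).
\end{equation*}
Pairing each of these with $\tau$ and using $\langle\tau,\tau^\top\rangle=|\tau^\top|^2$ yields closed-form expressions for $\langle\tau,\nabla V\rangle$ and $\langle\tau,\nabla(Vw)\rangle$ in terms of $\langle\tau,\nabla|A|\rangle$ and $|\tau^\top|^2$.

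Second, I would substitute these expressions into the left-hand side of the claim. The $\frac{1}{\eps}\langle\tau,\nabla|A|\rangle Vw$ term is produced with coefficient $+1$ by the first summand and with coefficient $-1$ by the $\frac{1}{\eps}(|A|+\Lambda\sigma u)\langle\tau,\nabla V\rangle w$ summand, so these cancel exactly. What survives from the $\langle\tau,\nabla|A|\rangle$ contributions is precisely $-2\eps\rho \langle\tau,\nabla|A|\rangle Vw$, matching the first term on the right-hand side. The remaining $|\tau^\top|^2$ terms combine to
\begin{equation*}
Vw\,|\tau^\top|^2\Bigl[\rho\bigl(|A|+\Lambda\sigma u\bigr)-\bigl(2\eps^2\rho+1\bigr)\Lambda\sigma\Bigr].
\end{equation*}

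Third, to match the target lower bound $(\rho|A|-3)|\tau^\top|^2 Vw$, it remains to verify $\Lambda\sigma\bigl[(2\eps^2\rho+1)-\rho u\bigr]\leq 3$. I would split into two cases: if $\rho u\geq 2\eps^2\rho+1$ the bracket is $\leq 0$ and the inequality is trivial; otherwise the bracket is bounded by $2\eps^2\rho+1\leq 3$, using $\eps\rho\leq \eps_0\rho\leq 1$, and then $\Lambda\sigma\leq\Lambda\sigma_0\leq 1$ gives the bound. Both $\eps\rho\leq 1$ and $\Lambda\sigma\leq 1$ are guaranteed by the choice $\eps_0=\sigma_0=\max(\rho,\Lambda)^{-1}$ fixed at the beginning of the proof of Theorem \ref{main_curv_bound}.

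There is no genuine conceptual obstacle here; the only subtlety is that the parameters $\rho$ and $\Lambda$ will be specified later in the proof, so the bookkeeping must be written in a way that uses only the universal inequalities $\eps\rho\leq 1$ and $\Lambda\sigma\leq 1$. The role of the auxiliary term $\Lambda\sigma u$ in the definition of $G$ is precisely what allows the $|\tau^\top|^2$-coefficient to be $\rho|A|-O(1)$ rather than some smaller expression, and this is the step that will later give the key positive term $\rho|A||\tau^\top|^2$ needed to absorb the reaction terms coming from $\Lap|A|$.
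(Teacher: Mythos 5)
Your proposal is correct and follows essentially the same route as the paper: both start from the critical-point identity $\nabla\log G=0$, express $\langle\tau,\nabla V\rangle$ and $\langle\tau,\nabla(Vw)\rangle$ in terms of $\langle\tau,\nabla|A|\rangle$ and $|\tau^\top|^2$, arrive at the exact intermediate expression $-2\eps\rho\langle\tau,\nabla|A|\rangle Vw + \bigl(\rho(|A|+\Lambda\sigma u)-(1+2\eps^2\rho)\Lambda\sigma\bigr)|\tau^\top|^2 Vw$, and then drop $\rho\Lambda\sigma u\geq 0$ and bound $(1+2\eps^2\rho)\Lambda\sigma\leq 3$. (One small nit: bounding $2\eps^2\rho\leq 2$ uses $\eps<1$ in addition to $\eps\rho<1$, which is available since $\eps_0=\max(\rho,\Lambda)^{-1}\leq 1$ for the constants chosen, but is worth saying explicitly.)
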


\begin{proof}[{Proof of Claim \ref{claim_gradientterms}}]
The equation $\nabla \log G=0$ can be written in the form
\begin{equation}
(\abs{A}+\Lambda\sigma u)\nabla(Vw)=Vw(\nabla\abs{A}+\Lambda\sigma\eps \tau^\top).
\end{equation}
Using this, and the formula $\nabla(Vw)=w\nabla V+Vw\eps\rho \tau^\top$, we compute
\begin{multline}
\tfrac{1}{\eps}\langle\tau,\nabla\abs{A}\rangle Vw-\left(\abs{A}+\Lambda\sigma u\right)\left(2\eps\rho \langle \tau,\nabla (Vw)\rangle+\tfrac{1}{\eps}\langle \tau,\nabla V\rangle w\right)\\
=\left(-2\eps\rho \langle \tau,\nabla\abs{A}\rangle +\left(\rho(\abs{A}+\Lambda\sigma u)-(1+2\eps^2\rho)\sigma \Lambda\right)\abs{\tau^\top}^2 \right)Vw.
\end{multline}
Dropping the term $\rho \Lambda\sigma u$ and estimating $(1+2\eps^2\rho)\sigma \Lambda<3$, the claim follows.
\end{proof}

Now, substituting \eqref{longeq2} and \eqref{longeq3} into \eqref{longeq1}, and using Claim \ref{claim_gradientterms}, we arrive at
\begin{multline}\label{long_ineq}
0\geq \tfrac{\delta \abs{\nabla \abs{A}}^2}{|A|}-\abs{A}^3 -C\s u |A|^2-C\abs{A}\\
+\left(\abs{A}+\Lambda\sigma u\right)\left(-C+\abs{A}^2+\rho\langle \tau,\nu\rangle^2-\sigma u\right)
-2\eps\rho\langle \tau,\nabla\abs{A}\rangle +(\rho\abs{A}-3)\abs{\tau^\top}^2.
\end{multline}
Observe that the $\abs{A}^3$-terms cancel, and that we have the estimate
\begin{equation}
-2\eps\rho\langle \tau,\nabla\abs{A}\rangle\geq - \tfrac{\delta \abs{\nabla \abs{A}}^2}{|A|}-\delta^{-1}\abs{A}.
\end{equation}
Also note that the identity $\langle \tau,\nu\rangle^2+\abs{\tau^\top}^2=1$ enables us to extract a positive term $\rho\abs{A}$. The idea is now that the good terms $\Lambda\sigma u\abs{A}^2$ and $\rho \abs{A}$ win against all other terms. Namely, from \eqref{long_ineq}, the discussion following it, and condition \eqref{ass_max} we obtain
\begin{equation}
(\tfrac{1}{2}\Lambda-C)\sigma u \abs{A}^2+(\rho -C-C\Lambda)\abs{A}\leq 0.
\end{equation}
Choosing $\Lambda=3C$ and $\rho=2(C+C\Lambda)$ this gives the desired contradiction.
\end{proof}
  
\section{Passing to the limits}\label{pass_lim}
In this final section, we explain how the double approximators $u_{\de,\s}$ converge to the arrival time $u$ of the mean curvature flow of $K_0$, and how the estimates of Theorem \ref{V_bd_2} and Theorem \ref{main_curv_bound} can be passed to the limit. This will be done in two steps, first taking the limit as $\s \rightarrow 0$ to obtain approximating translators, then taking the limit as $\de\rightarrow 0$. 

\begin{theorem}\label{app_existence}
For every $\de\in(0,\eps_0)$ there exists a relatively open set $\Omega_\de \subseteq K_0$ containing the boundary $\partial K_0$ such that the following holds.
\begin{enumerate}
\item For $\sigma\to 0$, we can take a limit $u_{\de,\s}\to u_{\eps}$ in $C^\infty_{\textrm{loc}}(\Omega_\de)$, and the limit  solves the equation
\begin{equation}\label{eq_ueps}
\begin{aligned}
\mathrm{div}\left(\frac{D u_\de}{\sqrt{\de^2+|D u_\de|^2}}\right)+\frac{1}{\sqrt{\de^2+|D u_\de|^2}}=0\qquad \textrm{in} \,\, \Omega_\eps.
\end{aligned}
\end{equation}
\item We have $u_\eps=0$ on $\partial K_0$, and $u_\eps(x)\to \infty$ uniformly as $x\to\partial\Omega_\de\setminus\partial K_0$.
\item For $(x,z)\in \graph({u_\eps}/{\eps})$ we have the estimates
\begin{align}\label{est_ueps}
H\left(x,z\right) \geq c e^{-\rho u_{\de}(x)},\quad
\frac{\abs{A}}{H}\left(x,z\right)  \leq C e^{\rho u_{\de}(x)}.
\end{align} 
\end{enumerate}
\end{theorem}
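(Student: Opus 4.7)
The plan is to set $u_\eps(x) := \lim_{\sigma \to 0^+} u_{\eps,\sigma}(x) \in [0,\infty]$, with $u_\eps = 0$ on $\partial K_0$, and then take $\Omega_\eps := \{x \in K_0 : u_\eps(x) < \infty\}$. Existence of the limit comes from monotonicity in $\sigma$: for $\sigma_1 < \sigma_2$, a positive interior maximum of $v := u_{\eps,\sigma_2} - u_{\eps,\sigma_1}$ would have $Du_{\eps,\sigma_2} = Du_{\eps,\sigma_1}$, reducing the difference of the two PDEs \eqref{double_reg} to $a_{ij}(Du_{\eps,\sigma_1}) D_iD_j v = \sigma_2 u_{\eps,\sigma_2} - \sigma_1 u_{\eps,\sigma_1}$ with $a_{ij}$ positive definite. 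Since the left hand side is $\leq 0$ at such a max while the right hand side is strictly positive (as $u_{\eps,\sigma_2} > u_{\eps,\sigma_1} \geq 0$ and $\sigma_2 > \sigma_1$), this is a contradiction, so $u_{\eps,\sigma}$ is nonincreasing in $\sigma$.

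\textbf{Openness and smooth convergence.} The key analytic step is a self-improving continuation argument. Proposition \ref{V_better_bd} and Lemma \ref{boundary_grad_est2} combine to give a $\sigma$-uniform bound
\begin{equation}
|Du_{\eps,\sigma}(x)| \leq V\!\left(x, u_{\eps,\sigma}(x)/\eps\right)^{-1} \leq c_0^{-1}\, e^{\rho u_{\eps,\sigma}(x)},
\end{equation}
with $c_0 = c_0(K_0) > 0$. If $u_{\eps,\sigma}(x_0) \leq M$, integrating this Lipschitz estimate along short straight lines in a local chart yields $u_{\eps,\sigma} \leq M+1$ on a ball $B_r(x_0)$ of radius $r = c_0 e^{-\rho(M+1)}$ depending only on $M$ and $K_0$. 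Therefore, if $u_\eps(x_0) < \infty$, then for all sufficiently small $\sigma$ the $u_{\eps,\sigma}$ are uniformly bounded on a fixed ball around $x_0$; the quasilinear equation is uniformly elliptic there, and De Giorgi--Nash--Moser together with Schauder estimates give $\sigma$-uniform $C^{k,\alpha}$ bounds. The monotone pointwise limit then upgrades to full $C^\infty_{\mathrm{loc}}$ convergence, showing that $\Omega_\eps$ is open, $u_\eps \in C^\infty(\Omega_\eps)$, and (since $\sigma u_{\eps,\sigma} \to 0$ locally) that $u_\eps$ solves \eqref{eq_ueps}.

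\textbf{Boundary behavior and blow-up.} For item (2), the $\sigma$-independent supersolution $v = \p(u)$ from the proof of Lemma \ref{boundary_grad_est2} dominates $u_{\eps,\sigma}$ by comparison on $K_0 \setminus K_T$; as $v$ is bounded on neighborhoods of $\partial K_0$ and vanishes on $\partial K_0$, this places a whole tubular neighborhood of $\partial K_0$ inside $\Omega_\eps$ with $u_\eps|_{\partial K_0} = 0$. Uniform blow-up at $\partial \Omega_\eps \setminus \partial K_0$ is a direct corollary of the continuation argument: any sequence $x_n \to x_\infty \in \partial \Omega_\eps \setminus \partial K_0$ with $u_\eps(x_n) \leq M$ would satisfy $B_{r(M)}(x_n) \subset \Omega_\eps$, placing $x_\infty$ in the interior of $\Omega_\eps$, a contradiction.

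\textbf{Passing the estimates.} Finally, since $|A|$ and $H$ are local geometric invariants of the graph, $C^\infty_{\mathrm{loc}}$ convergence transfers them smoothly, and the estimates from Theorem \ref{V_bd_2} and Theorem \ref{main_curv_bound} pass to \eqref{est_ueps} upon using $\sigma u_{\eps,\sigma} \to 0$ locally on $\Omega_\eps$. The main obstacle is the continuation argument in the second step: everything hinges on leveraging the $\sigma$-independent gradient bound from Proposition \ref{V_better_bd} and Lemma \ref{boundary_grad_est2} to promote the monotone pointwise limit into smooth local compactness of the approximators.
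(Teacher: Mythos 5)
Your proposal is correct and follows essentially the same route as the paper: monotone pointwise limit in $\sigma$ (the paper phrases the comparison via $u_{\eps,\sigma_1}$ being a supersolution of the $(\eps,\sigma_2)$-problem, you spell out the max-principle contradiction, both valid), followed by the $\sigma$-uniform gradient bound $|Du_{\eps,\sigma}|\leq Ce^{\rho u_{\eps,\sigma}}$ from Theorem \ref{V_bd_2} to upgrade to $C^\infty_{\mathrm{loc}}$ convergence via De Giorgi--Nash--Moser and Schauder, and the same gradient-propagation argument for openness of $\Omega_\eps$ and blow-up at $\partial\Omega_\eps\setminus\partial K_0$. Your only cosmetic deviation is invoking the $\phi(u)$ barrier from Lemma \ref{boundary_grad_est2} to place a tubular neighborhood of $\partial K_0$ in $\Omega_\eps$, where the paper reads this off directly from the gradient estimate plus $u_{\eps,\sigma}|_{\partial K_0}=0$; both are sound.
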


\begin{remark}
Equation \eqref{eq_ueps} says that $L^\eps_t=\{(x,z)\in \Omega_\eps \,|\, z\leq \tfrac{u_\eps(x)-t}{\eps}\}$ is a selfsimilar solution of the mean curvature flow in $N\times \mathbb{R}$, translating downwards with speed $1/\eps$.
\end{remark}

\begin{proof}[{Proof of Theorem \ref{app_existence}}]
Fix $\de\in(0,\eps_0)$. First observe that we have the monotonicity
\begin{equation}
u_{\eps,\s_1}\geq u_{\eps,\s_2}\qquad \textrm{for}\quad \s_1\leq \s_2,
\end{equation}
since $u_{\s_1}$ is a supersolution to the $(\de,\s_2)$-equation \eqref{double_reg}. Thus, for every $x\in K_0$ we can pass to a pointwise (possibly improper) limit $u_\eps(x)=\lim_{\sigma\to 0} u_{\eps,\sigma}(x)\in[0,\infty]$.

Let $\Omega_\de=\{x\in K_0 \,| \, u_\eps(x)<\infty\}$. By Theorem \ref{V_bd_2} we have the gradient estimate
\begin{equation}\label{gradestagain}
 \abs{D u_{\eps,\sigma}}\leq Ce^{\rho u_{\eps,\sigma}},
\end{equation}
where $C=C(K_0)<\infty$. By the gradient estimate, if $u_{\eps,\sigma}\leq \Lambda$ at some point, then $u_{\eps,\sigma}\leq 2 \Lambda$  in a neighborhood of definite size. In particular, $\Omega_\de\subset K_0$ is open and contains a neighborhood of the boundary $\partial K_0$. Moreover, combining the gradient estimate with DeGiorgi-Nash-Moser and Schauder estimates we see that the convergence $u_{\eps,\sigma}\to u_\eps$ is locally smooth in $\Omega_\eps$.
In particular, since we have smooth convergence we can easily take the limit $\sigma\to 0$ in \eqref{double_reg} to obtain \eqref{eq_ueps}, and take the limit $\sigma\to 0$ in Theorem \ref{V_bd_2} and Theorem \ref{main_curv_bound} to obtain \eqref{est_ueps}. Finally, suppose there is a sequence $x_i\in \Omega_\eps$ with $x_i\to x\in\partial \Omega_\eps\setminus\partial K_0$, but $\sup_i u_{\eps}(x_i)<\infty$. Then the gradient estimate gives an open neighborhood of $x$ where $u_\eps$ is bounded; this contradicts $x\in\partial \Omega_\eps\setminus\partial K_0$, and thus proves property 2.
\end{proof}

\begin{theorem}\label{u_hat_lim}
Let $K_0\subset N$ be a mean convex domain, and let $u:K_0\setminus K_\infty\to \mathbb{R}$ be the time of arrival function of its level set flow $\{K_t\}_{t\geq 0}$. Let $u_\eps:\Omega_\eps\to\mathbb{R}$, $\eps\in(0,\eps_0)$, be the family of functions given by Theorem \ref{app_existence}, and let $L^\eps_t=\{(x,z)\in \Omega_\eps \,|\, z\leq \tfrac{u_\eps(x)-t}{\eps}\}$.
\begin{enumerate}
\item For $\eps\to 0$, the functions $u_\eps$ converge locally uniformly to $u$, and the family of mean curvature flows $\{L^\eps_t\}$ converges to the mean curvature flow $\{K_t\times \mathbb{R}\}$ in the strong Hausdorff sense (see \cite{HK_meanconvex}) and in the sense of Brakke flows (see \cite{Ilmanen}).
\item The level set flow $\{K_t\}$ satisfies the estimates
\begin{equation}\label{mainesttoprove}
\inf_{\partial K_t^\textrm{reg}}H\geq c e^{-\rho t},\qquad \inf_{ \partial K_t^\textrm{reg}}\frac{\abs{A}}{H}\leq C e^{\rho t}.
\end{equation}
\end{enumerate}
\end{theorem}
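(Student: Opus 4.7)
The key a priori tool is the gradient bound $\abs{Du_\eps} \leq Ce^{\rho u_\eps}$, which follows from the identity $(\eps^2 + \abs{Du_\eps}^2)^{-1/2} = H$ on $\graph(u_\eps/\eps)$ together with the lower bound in Theorem \ref{app_existence}(3). On sets where $u_\eps$ stays locally bounded this gives equicontinuity; Arzela-Ascoli plus Schauder estimates applied to equation \eqref{eq_ueps} then produce a locally smooth subsequential limit $\bar u$ on the open set $\{\bar u < \infty\}$. Passing to the limit in \eqref{eq_ueps} shows $\bar u$ solves $\textrm{div}(Du/\abs{Du}) + 1/\abs{Du} = 0$ classically there. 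Standard viscosity-solution arguments as in \cite{evans-spruck,Ilmanen}, extended to the Riemannian setting, identify $\bar u$ with the arrival time $u$ of the level set flow of $K_0$ and upgrade the subsequential convergence to locally uniform convergence $u_\eps \to u$ on $K_0 \setminus K_\infty$.

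\textbf{Step 2: Hausdorff and Brakke convergence of $L^\eps_t$.}
Strong Hausdorff convergence of $L^\eps_t$ to $K_t \times \mathbb{R}$ follows directly from Step 1: the horizontal cross-section $\{x : u_\eps(x) \geq t + \eps z\}$ converges in Hausdorff sense to $K_t = \{u \geq t\}$, uniformly on compact $z$-intervals. For Brakke convergence, each $L^\eps_t$ is a smooth translator and hence automatically a Brakke flow; the gradient bound and the one-sheeted graphical structure give uniform local mass bounds, so Ilmanen's compactness \cite{Ilmanen} extracts subsequential integral Brakke flow limits. Hausdorff convergence pins down the support as $\partial K_t \times \mathbb{R}$, and unit multiplicity is inherited from the graphical approximators.

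\textbf{Step 3: Passing the curvature estimates.}
Fix $t > 0$ and $p \in \partial K_t^\textrm{reg}$. By definition the level set flow is a smooth mean convex mean curvature flow in a space-time neighborhood of $(p,t)$, so $\{\partial K_s \times \mathbb{R}\}$ is a smooth MCF near $(p,0,t)$. Applying Brakke's local regularity theorem to the Brakke flow convergence of Step 2 upgrades it to \emph{smooth} convergence of $\partial L^\eps_s$ to $\partial K_s \times \mathbb{R}$ on a parabolic neighborhood of $(p,0,t)$. Choosing $s_\eps = u_\eps(p) \to u(p) = t$, the point $(p,0)$ lies on $\partial L^\eps_{s_\eps}$, and by translation invariance of $L^\eps$ the mean curvature and $\abs{A}$ at $(p,0)$ on $\partial L^\eps_{s_\eps}$ coincide with those of $\graph(u_\eps/\eps)$ at $(p, u_\eps(p)/\eps)$. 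Theorem \ref{app_existence}(3) and smooth convergence then yield
\begin{equation}
H(p) = \lim_{\eps \to 0} H(p,0)_{\partial L^\eps_{s_\eps}} \geq \lim_{\eps \to 0} c\, e^{-\rho u_\eps(p)} = c\, e^{-\rho t},
\end{equation}
and likewise $\abs{A}/H\, (p) \leq C e^{\rho t}$. Taking the infimum over $p \in \partial K_t^\textrm{reg}$ gives \eqref{mainesttoprove}.

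\textbf{Main obstacle.}
The delicate point is the smoothness upgrade in Step 3, since Brakke's local regularity requires both unit multiplicity and a smooth limit on a small parabolic ball. The first is secured by the graphical structure of the translators, and the second by the definition of $\partial K_t^\textrm{reg}$. A closely related subtlety is identifying the Brakke flow limit extracted in Step 2 with the canonical Brakke flow of $\{K_t \times \mathbb{R}\}$ -- classical in Euclidean space but requiring verification in the Riemannian setting, which is handled by recognizing both objects as weak limits of the same translator sequence.
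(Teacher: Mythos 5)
Your proposal follows the same broad architecture as the paper's proof (gradient bound, Arzel\`a-Ascoli, identification of the limit, Brakke convergence, local regularity), and your Step~3 is actually a helpful elaboration of a point the paper treats in one sentence. However, Step~1 has two genuine problems.

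First, the claim that ``Arzel\`a-Ascoli plus Schauder estimates applied to equation \eqref{eq_ueps} produce a locally smooth subsequential limit $\bar u$'' and that $\bar u$ solves the level set equation \emph{classically} is false as stated. The operator in \eqref{eq_ueps} has ellipticity ratio comparable to $\de^2/(\de^2+\abs{Du_\de}^2)$, which tends to zero as $\de\to0$ wherever $\abs{Du_\de}$ stays bounded away from $0$; the Schauder constants therefore blow up, and no uniform $C^{2,\al}$ bound is available. The correct statement, and the one the paper uses, is that the gradient bound gives a uniformly Lipschitz subsequential limit $\hat u$ and that $\hat u$ solves the degenerate level set equation only in the \emph{viscosity} sense.

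Second, the sentence ``Standard viscosity-solution arguments $\ldots$ identify $\bar u$ with the arrival time $u$'' papers over what is in fact the core of the argument. Knowing that $\hat u$ is a viscosity solution only tells you that $\widehat{M}_t=\{\hat u=t\}$ satisfies the avoidance principle, hence is a set-theoretic subsolution, hence $\widehat{M}_t\subseteq\partial K_t$ by maximality of the level set flow. It does \emph{not} directly give equality, and equality is what you need both for the locally uniform convergence $u_\de\to u$ on all of $K_0\setminus K_\infty$ and for the Hausdorff/Brakke identifications in Step~2. The paper closes this by proving that $I=\{t:\widehat{M}_t=\partial K_t\}$ is nonempty, closed, and (using the gradient bound together with mean convexity, which forces $\partial K_t\cap\partial K_{t'}=\emptyset$ for $t\neq t'$) open in $[0,\infty)$. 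This is not automatic in the Riemannian setting, where $K_\infty\neq\emptyset$ and $u$ is only defined on $K_0\setminus K_\infty$, and it is not covered by a reference you can simply point to; your write-up needs to supply this argument or an equivalent one.
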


\begin{proof}
By the first item of Theorem \ref{app_existence} and equation \eqref{gradestagain} we have the gradient estimate
\begin{equation}
\abs{D u_{\eps}}(x) \leq Ce^{\rho u_{\eps}(x)},
\end{equation}
 where $x\in\Omega_\eps$, and $C=C(K_0)<\infty$. The gradient estimate implies that for every sequence $\eps_k\to 0$ there exists a subsequence $\eps_k'\to 0$ and a relatively open set $\Omega \subseteq K_0$ containing the boundary $\partial K_0$  such that $u_{\eps_k'}\to \hat{u}$ locally uniformly in $\Omega$ and $u_{\eps_k'}\to\infty$ uniformly as $x\to\partial \Omega\setminus \partial K_0$.
  Since $\hat{u}$ arises as a limit of locally uniform Lipschitz functions, we can take the limit $\eps_k'\to 0$ in \eqref{eq_ueps} and infer that $\hat{u}$ solves the equation
\begin{equation}
\mathrm{div}\left(\frac{D \hat{u}}{|D \hat{u}|}\right)+\frac{1}{|D \hat{u}|}=0,
\end{equation}
in the viscosity sense. By the definition of viscosity solutions, the family of closed sets $\widehat{M}_t=\{x\in K_0\, | \, \hat{u}(x)= t\}_{t\geq 0}$ satisfies the avoidance principle, and thus is a set-theoretic subsolution of the mean curvature flow. Since $\{\partial K_t\}_{t\geq 0}$ is the maximal set theoretic subsolution starting at $\partial K_0$, we have the inclusion $\widehat{M}_t\subseteq \partial K_t$. Let $I=\{ t\in [0,\infty) \, | \, \widehat{M}_t=\partial K_t\}$. We will show that $I=[0,\infty)$. Clearly $0\in I$. 
Consider $\{t_n\}\subseteq I$ with $t_n \nearrow t<\infty$, and let $x\in \partial K_t$.
Choose $x_n\in \partial K_{t_n}$ with $x_n\to x$. Since $\hat{u}(x_n)= t_n$ and $(x_n,t_n)\to (x,t)$ it follows that $\hat{u}(x)= t$, and thus $x\in \widehat{M}_t$.
Consider now $T\in I$ and $x\in \partial K_t$ for $t\in(T,T+\delta)$. If $\delta$ is small enough, then by the gradient estimate $x\in \Omega$ and $\hat{u}(x)=t'$ for some $t'$ close to $T$. Thus, $x\in \widehat{M}_{t'}\subseteq \partial K_{t'}$. Since by mean convexity $\partial K_{t}\cap \partial K_{t'}=\emptyset$ for $t\neq t'$, it follows that $t=t'$, and thus $x\in \widehat{M}_t$. We have thus identified the limit with the unique mean convex level set flow, namely $\Omega=K_0\setminus K_\infty$, $\hat{u}=u$ and $\widehat{K}_t=K_t$. By uniqueness of the limit, the subsequential convergence $u_{\eps_k'}\to u$ actually entails a full limit.

Note that the time of arrival function of $\{L_t^\eps\}$ is given by $U_\eps(x,z)=u_{\eps}(x)-\eps z$. For $\eps\to 0$ it converges locally uniformly to $U(x,z)=u(x)$, which is the time of arrival function of $\{{K}_t\times \mathbb{R}\}$. In particular, $\{L_t^\eps\}$ converges to $\{K_t\times \mathbb{R}\}$ in the strong Hausdorff sense \cite[Def. 4.10]{HK_meanconvex}. By the compactness theorem for Brakke flows \cite[Thm. 7.1]{Ilmanen} and the uniqueness of the limit it also converges in the sense of Brakke flows.

Finally, having established the convergence, we can now use the local regularity theorem for the mean curvature flow \cite{brakke,White_regularity} to conclude that the limit for $\eps\to 0$ of the estimates in \eqref{est_ueps} yields the estimates in \eqref{mainesttoprove}.
\end{proof}

\bibliographystyle{alpha}
\bibliography{HaslhoferHershkovits_levelset}

\vspace{10mm}
{\sc Robert Haslhofer and Or Hershkovits, Courant Institute of Mathematical Sciences, New York University, 251 Mercer Street, New York, NY 10012, USA}\\

\emph{E-mail:} robert.haslhofer@cims.nyu.edu, or.hershkovits@cims.nyu.edu

\end{document}